\newtheorem{theorem}{Theorem}[section]
\newtheorem{proposition}[theorem]{Proposition}
\newtheorem{corollary}[theorem]{Corollary}
\newcommand{\eqref}[1]{(\ref{#1})}
\newcommand{\arcosh}{\operatorname{ar\,cosh}}
\newcommand{\arsinh}{\operatorname{ar\,sinh}}
\begin{document}
\begin{frontmatter}

\title{On hyperbolic Bessel processes and beyond}
\runtitle{On hyperbolic Bessel processes and beyond}

\begin{aug}
\author{\fnms{Jacek} \snm{Jakubowski}\thanksref{e1}\ead[label=e1,mark]{jakub@mimuw.edu.pl}} \and
\author{\fnms{Maciej} \snm{Wi\'sniewolski}\corref{}\thanksref{e2}\ead[label=e2,mark]{wisniewolski@mimuw.edu.pl}}
\runauthor{J. Jakubowski and M. Wi\'sniewolski} 
\address{Institute of Mathematics, University
of Warsaw, Banacha 2, 02-097 Warszawa, Poland. \\\printead{e1,e2}}
\end{aug}

\received{\smonth{6} \syear{2011}}
\revised{\smonth{6} \syear{2012}}

%
\begin{abstract}
We investigate distributions of hyperbolic Bessel processes. We find
links between the hyperbolic cosine of hyperbolic Bessel
processes and functionals of geometric Brownian motion. We
present an explicit formula for the Laplace transform of the hyperbolic
cosine of a hyperbolic Bessel process and some other interesting
probabilistic representations of this Laplace transform.
We express the one-dimensional distribution of a hyperbolic Bessel
process in terms of other, known and independent processes. We
present some applications including a new proof of Bougerol's
identity and its generalization. We characterize the distribution
of the process which is the hyperbolic sine of hyperbolic Bessel
process.
\end{abstract}

%
\begin{keyword}
\kwd{Bessel process}
\kwd{Brownian motion}
\kwd{hyperbolic Bessel process}
\kwd{Laplace transform}
\end{keyword}

\end{frontmatter}

\section{Introduction}
The important role which functionals of Brownian motion play in many
fields of mathematics (including mathematical finance, diffusion processes
in random environment, probabilistic studies related to hyperbolic
spaces etc.) is a motivation to study the wide class of different
diffusion processes connected to those functionals somehow (see, e.g.,
\cite{SB,CPY,DGY,MatII} and
\cite{MatIII}). In this work, we consider a special class of such
diffusions: Brownian motion with a special stochastic drift. It is
well known in the literature that processes like Ornstein--Uhlenbeck processes,
Bessel processes or radial Ornstein--Uhlenbeck processes are examples of
Brownian motion with stochastic drift connected to special
functions: parabolic cylinder, Bessel or Kummer functions (see
\cite{B}). In this work, we consider another interesting class of
such diffusions -- hyperbolic Bessel processes which are connected to
Legendre functions. We establish the distribution of a hyperbolic
Bessel process. Borodin presented the computation of the transition
density function for a hyperbolic Bessel process (see formulas (5.4)
and (5.5) in \cite{B}). His method relied on the
connection between the Laplace transform of the transition density
function and two increasing and decreasing solutions of some
ordinary differential equation of the second order (for more
details of this method see also \cite{SB}). Gruet established the
transition density functions of hyperbolic processes by methods of
planar geometry
(see \cite{G,G2}). However, the results
obtained by both authors are technically very complicated. The role
of the hyperbolic Bessel processes in the world of Brownian motion
functionals was mentioned also by Byczkowski, Ma{\l}ecki and Ryznar
\cite{MBR}.

Our approach to hyperbolic Bessel processes is completely different
and purely probabilistic. We present results for hyperbolic Bessel
processes with index
$\alpha\geq-1/2$ and for fixed time. We find a link between
the hyperbolic cosine of a hyperbolic Bessel process and functionals of
geometric Brownian motion.
We present different probabilistic representations of the
Laplace transform of the hyperbolic cosine of a hyperbolic Bessel
process $R$.
We also give
probabilistic representations of the density of $\cosh R_t$.
The link between hyperbolic Bessel processes and functionals of
geometric Brownian motion enables us to establish a simple explicit
form of the Laplace transform of the vector $(\mathrm{e}^{B_t +
kt},\int_0^t\mathrm{e}^{2(B_u+ku)}\, \mathrm{d}u)$ for a standard Brownian motion $B$ and
a nonnegative integer $k$. These results are obtained for fixed $t$
(and not for stochastic time) and so they can be effectively
used in numerical computations.
We express the distribution of a hyperbolic Bessel process $R$ in
terms of
a squared Bessel process $X$ and a
vector $(B^{(\mu)}, A^{(\mu)})$ of Brownian
motion with drift and the integral of geometric Brownian motion,
which is independent of $X$. The joint distribution of $(B^{(\mu)},
A^{(\mu)})$ is known in the literature and is expressed via the density
function (e.g., see \cite{MatII}). As an interesting example of
applications, we find an alternative simple proof of the well known Bougerol
identity (for a thorough study, we refer the reader to \cite{ADY,AG} or \cite{Comoyo}).
We also characterize the distribution of $\sinh(R)$ for $R$ being
a hyperbolic Bessel process. It is interesting that this distribution
is deduced from the stochastic process which is in
a sense a generalization of a squared Bessel process.
At the end, we outline the case of exponential random time
independent of the Brownian motion driving the hyperbolic Bessel
process. In this case, many explicit and computable results are known
in the literature (see, e.g., \cite{SB,MatII} or \cite{MatIII}).

\section{Hyperbolic Bessel processes with fixed time}
We consider a complete probability space
$(\Omega,\mathcal{F},\mathbb{P})$ with filtration $\mathbb{F}=
(\mathcal{F}_t)_{t \in[0,\infty)}$ satisfying the usual conditions
and $\mathcal{F}=\mathcal{F}_{\infty}$, and with a standard Brownian
motion $B$. We define a hyperbolic
Bessel process $R$ of index $\alpha\in(-\frac12;\infty)$ as a nonnegative
diffusion, starting from a nonnegative $x$, given by
%
\begin{equation}
\label{eqhbp1} R_t = x + B_t + \biggl(\alpha+
\frac{1}{2} \biggr)\int_0^t\coth
R_u \, \mathrm{d}u.
\end{equation}
For $\alpha= -\frac12$, we define the hyperbolic Bessel process as the
reflected (at $0$) Brownian motion starting from a nonnegative $x$,
that is,
%
\begin{equation}
\label{eqhbp15} R_t = |x + B_t|.
\end{equation}

Now, we discuss properties of solutions to \eqref{eqhbp1} which follow
from general theory on SDE (see, e.g., Cherny and Engelbert \cite{cher-eng}).
The behavior of $R$ depends on the
index $\alpha$. For $\alpha\in(-\frac12,\infty)$, the point 0 is an
isolated singular point
of \eqref{eqhbp1}. For $\alpha\in(-\frac12,0)$ and $x\geq0$ there
exists a positive solution of \eqref{eqhbp1} defined up to $T_a= \inf\{
t\geq0\dvt   R_t=a\}$ for every
$a> x \geq0$ and such a solution is unique. This solution hits zero
with a positive probability (see \cite{cher-eng}, Theorem 2.12), but the point zero is instantaneously reflecting.
Indeed we check that $m(\{0\}) = 0$ for $m$ the speed measure of the
hyperbolic Bessel process (for the boundary classification see, e.g., Borodin and Salminen \cite{SB}, pages 14--16).
Therefore, by analogy with a SDE defining $\delta$-dimensional Bessel
process, in the case $\alpha\in(-\frac12,0)$ we understand \eqref
{eqhbp1} as
SDE
%
\begin{equation}
\label{eqhbp12} \mathrm{d}R_t =B_t + \biggl(\alpha+
\frac{1}{2} \biggr) \mathbb{\mathbf{1}}_{\{
R_t\neq0\}} \coth
R_t \, \mathrm{d}t ,\qquad R_0=x.
\end{equation}
For the case $\alpha\geq0$, we have two possibilities. If $x>0$, then
there exist a unique strictly positive solution defined up to $T_a$ for every
$a> x \geq0$.
If $x=0$, then there exists a positive solution of \eqref{eqhbp1}
defined up to $T_a$, and such a solution is unique. Moreover, $R>0$ on
$]0,T_a]$ (see \cite{cher-eng}, Theorem 2.16).
Summing up, general theory implies that for $\alpha> -\frac12$ and
$x\geq0$ there exists a unique positive solution defined up to the
explosion time.

Recall that Gruet \cite{G2} defines a hyperbolic Bessel
process of index $\alpha> -1/2$ as a nonnegative diffusion $R$
with generator
%
\begin{equation}
\mathcal{A} = \frac{1}{2}\frac{\mathrm{d}^2}{\mathrm{d}x^2} + \biggl(\alpha+
\frac
{1}{2} \biggr)\coth(x)\frac{\mathrm{d}}{\mathrm{d}x}.
\end{equation}
In the case of $-1/2 < \alpha<0$, the definition is completed by the
requirement that $0$ is an instantaneously reflecting point. It is
also assumed that the starting point is nonnegative. A similar
definition of hyperbolic Bessel process can be found in Borodin
\cite{B}. In \cite{G2}, by the identification of the Green function,
it is stated that a reasonable candidate for a hyperbolic Bessel
process with $\alpha= -1/2$ is a reflected Brownian motion (see the
end of Section 3 in \cite{G2}).

To start the discussion of the properties of hyperbolic Bessel
processes, we define a process $\theta$ to be the solution of the SDE
%
\begin{equation}
\label{SDETH} \mathrm{d}\theta_t = \sqrt{\bigl|\theta_t^2
- 1\bigr|}\, \mathrm{d}B_t + (\alpha+ 1)|\theta_t|\, \mathrm{d}t,
\end{equation}
where $\alpha\geq- 1/2$, and such that $\theta_0 = x\geq1$.
Observe that for $x\in\mathbb{R}$ and coefficients $\sigma(x) = \sqrt {|x^2-1|}$, $b(x) = (\alpha+ 1)|x|$ we have
\[
\bigl|\sigma(x)\bigr| + \bigl|b(x)\bigr| \leq(2+\alpha) \bigl(|x| +1\bigr),
\]
so there exists a weak solution, which does not explode. Using Ex. 3.14
\cite{RY}, Chapter IX (with $g\equiv1, c = 3, \delta= 1$), we see
that pathwise uniqueness holds for
\eqref{SDETH}. Thus, the SDE (\ref{SDETH}) has a unique strong nonexploding
solution.

Now, we consider the diffusion
$\eta_t := \theta_t - 1$. It is a diffusion with drift and diffusion
coefficients equal to $\tilde{b}(y) = (\alpha+1)|y+1|$ and
$\tilde{\sigma}(y) = \sqrt{|y^2+2y|}$,
respectively. We observe that 0 is an isolated singular point for $\eta
$. For $\alpha\geq
0$ and $a>0$, we have
\[
\int_0^a\exp \biggl(\int
_x^a\frac{2\tilde{b}(y)}{\tilde{\sigma^2}(y)} \, \mathrm{d}y \biggr)\, \mathrm{d}x = \int
_0^a \biggl(\frac{a^2+2a}{x^2+2x}
\biggr)^{\alpha+1}\, \mathrm{d}x \geq a^{\alpha+1}\int_0^ax^{-(\alpha+1)}\, \mathrm{d}x
= \infty.
\]
Therefore, from \cite{cher-eng}, Theorems 2.16 and 2.17, it follows that
$\eta_t> 0$ for all
$t>0$, provided $\eta_0> 0$. So, if $\theta_0 > 1$, then $\theta_t
> 1$ for all $t>0$.
Hence, the process $\theta$ satisfies for $\alpha\geq0$ the SDE
%
\begin{equation}
\label{SDETHH} \mathrm{d}\theta_t = \sqrt{\theta_t^2
- 1} \, \mathrm{d}B_t + (\alpha+ 1)\theta_t \, \mathrm{d}t.
\end{equation}
It turns out that in the case of $\alpha\geq0$ we can recover a
hyperbolic Bessel
process from the process~$\theta$. As usual, by $\arcosh$ we denote
the inverse function of $\cosh$.
%
\begin{theorem} \label{th-repHBP}
If $\alpha\geq0$ and $x > 1$, then the
process $R_t = \arcosh(\theta_t)$ is a hyperbolic Bessel
process of index $\alpha$,
where $\theta$ is the solution of \eqref{SDETHH}, $\theta_0 = x$.
\end{theorem}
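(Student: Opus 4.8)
The plan is to apply Itô's formula to the process $R_t = \operatorname{ar cosh}(\theta_t)$ and verify that it satisfies the defining SDE \eqref{eq:hbp1} of a hyperbolic Bessel process. Since we are given (for $\alpha \ge 0$ and $x > 1$) that $\theta_t > 1$ for all $t$ and that $\theta$ solves \eqref{SDETHH}, the function $f(y) = \operatorname{ar cosh}(y)$ is smooth on the open region $(1,\infty)$ where $\theta$ lives, so Itô's formula applies cleanly without boundary complications.

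Let me think through what I expect to happen. I recall that $\frac{d}{dy}\operatorname{ar cosh}(y) = \frac{1}{\sqrt{y^2-1}}$ and $\frac{d^2}{dy^2}\operatorname{ar cosh}(y) = \frac{-y}{(y^2-1)^{3/2}}$. The quadratic variation term is $d\langle\theta\rangle_t = (\theta_t^2 - 1)\,dt$. So Itô's formula gives

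$$dR_t = \frac{1}{\sqrt{\theta_t^2-1}}\,d\theta_t - \frac{1}{2}\frac{\theta_t}{(\theta_t^2-1)^{3/2}}(\theta_t^2-1)\,dt.$$

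The martingale part is $\frac{1}{\sqrt{\theta_t^2-1}}\cdot\sqrt{\theta_t^2-1}\,dB_t = dB_t$, which is exactly the Brownian increment we want. The drift part should collapse to $(\alpha + \frac{1}{2})\coth(R_t)\,dt$.

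Let me verify the drift computation mentally. The drift from $d\theta_t$ contributes $\frac{(\alpha+1)\theta_t}{\sqrt{\theta_t^2-1}}\,dt$, and the second-order term contributes $-\frac{1}{2}\frac{\theta_t}{\sqrt{\theta_t^2-1}}\,dt$. Combining, the total drift coefficient is $\frac{\theta_t}{\sqrt{\theta_t^2-1}}\big((\alpha+1) - \frac{1}{2}\big) = (\alpha+\frac{1}{2})\frac{\theta_t}{\sqrt{\theta_t^2-1}}$. Since $\theta_t = \cosh(R_t)$ and $\sqrt{\theta_t^2-1} = \sinh(R_t)$ (using $\cosh^2 - \sinh^2 = 1$ and positivity of $\sinh$ on $R_t > 0$), this equals $(\alpha+\frac{1}{2})\coth(R_t)$, exactly as required. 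Finally $R_0 = \operatorname{ar cosh}(\theta_0) = \operatorname{ar cosh}(x)$, giving the correct starting point (relabeling the initial value).

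**The main obstacle I anticipate is justifying the Itô computation at the level of rigor the applicability demands**, specifically confirming that the sample paths of $\theta$ stay within the domain where $\operatorname{ar cosh}$ is $C^2$. This is where the earlier work in the excerpt pays off: the argument that $\eta_t = \theta_t - 1 > 0$ for all $t > 0$ (via the Cherny--Engelbert boundary classification showing $0$ is inaccessible) guarantees $\theta_t > 1$, so the paths never reach the singularity of $\operatorname{ar cosh}$ at $y = 1$ and the derivatives never blow up along the trajectory. With that in hand, the rest is a routine but clean application of Itô's formula, and the algebraic identity $(\alpha+1) - \frac{1}{2} = \alpha + \frac{1}{2}$ is what makes the two drift contributions combine into precisely the hyperbolic Bessel drift.
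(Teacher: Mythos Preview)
Your proposal is correct and follows exactly the same route as the paper's proof: the paper simply notes that $\theta_t>1$ (so $R_t>0$ and $\operatorname{arcosh}$ is smooth along the path) and then says ``we can use the It\^o lemma to obtain that $R$ satisfies \eqref{eq:hbp1}'', leaving the derivative computations implicit. You have carried out those computations explicitly and correctly, including the key simplification $(\alpha+1)-\tfrac12=\alpha+\tfrac12$ and the identification $\theta_t/\sqrt{\theta_t^2-1}=\coth R_t$.
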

\begin{pf} Observe that, by definition,
%
\begin{equation}
\label{conHB} R_t = \arcosh(\theta_t) = \ln
\Bigl(\theta_t + \sqrt{\theta_t^2-1}
\Bigr) > 0.
\end{equation}
As $\theta_t > 1$ for all $t\geq0$, we can use the It\^o
lemma to conclude that $R$ satisfies \eqref{eqhbp1}.
\end{pf}
%
\begin{theorem}
A hyperbolic Bessel process $R$ of index $\alpha\geq-1/2$ does not
explode.
\end{theorem}
\begin{pf}
For $\alpha\geq0$, we use Theorem
\ref{th-repHBP}, which implies that $R$ does not explode since
$\theta$ does not explode. For $\alpha= -\frac12$ the process $R$ is
given by \eqref{eqhbp15}, so it does not explode.
For $\alpha\in(-\frac12;0)$, we observe that
$\cosh(R)$ and $\theta$ have the same generator
$\mathcal{A}_{\theta}$ on $C_c^2$, the space of twice continuously
differentiable functions with compact support. From the uniqueness
of solution of the martingale problem induced by $\mathcal{A}_{\theta}$,
we conclude that $\cosh(R)$ and $\theta$ have the same
distribution. The diffusion $\theta$ does not explode, so $R$ does not
explode either.
\end{pf}
Now we give a formula for the Laplace transform of cosh of a hyperbolic
Bessel process with $\alpha\geq-\frac12$, one of the most important
results of this paper.
%
\begin{theorem} \label{twHB}
Fix $\alpha\geq-\frac12$. If $R$ is
a hyperbolic Bessel process of index $\alpha$ and $R_0 = x\geq0$, then
for $t>0$ and $\lambda> 0$,
%
\begin{equation}
\mathbb{E} \bigl[ \exp (-\lambda\cosh R_t ) \bigr] = \mathbb{E}
\biggl[ \exp \biggl(- \lambda\cosh(x) V_t -\frac{\lambda^2}{2}\int
_0^t V_u^2\, \mathrm{d}u \biggr)
\biggr],
\end{equation}
where $V_t = \mathrm{e}^{(\alpha+{1}/{2})t + B_t}$ and $B$ is a
standard Brownian motion.
\end{theorem}
\begin{pf}
Set $\theta_t = \cosh R_t \geq1$, so $\theta_0 = \cosh
x$. By the definition of $R$, the properties of the function $\cosh$
and the It\^o lemma we see that
%
\begin{equation}
\label{thetadyn} \mathrm{d}\theta_t = \sqrt{\theta_t^2
-1} \, \mathrm{d}B_t + a\theta_t\,\mathrm{d}t,
\end{equation}
where $a = \alpha+ 1$. Using again the It\^o lemma, we obtain
%
\begin{equation}
\label{eqTRH} \mathrm{d}\mathrm{e}^{-\lambda\theta_t} = -\lambda \mathrm{e}^{-\lambda\theta_t}\sqrt{
\theta_t^2 -1}\, \mathrm{d}B_t -\lambda a\mathrm{e}^{-\lambda\theta_t}
\theta_t\,\mathrm{d}t + \frac{\lambda^2}{2}\mathrm{e}^{-\lambda\theta_t}\bigl(
\theta_t^2 - 1\bigr)\, \mathrm{d}t.
\end{equation}
The process $\int_0^t \mathrm{e}^{-\lambda\theta_u}\sqrt{\theta_u^2
-1} \, \mathrm{d}B_u$ is a martingale
since $\mathrm{e}^{-x}\sqrt{x^2-1}\leq \mathrm{e}^{-1}$ for $x\geq 1$.
%

Thus, we infer  from (\ref{eqTRH}) that
%
\begin{equation}
\label{eqTRHH} \mathbb{E}\mathrm{e}^{-\lambda\theta_{t}} = \mathrm{e}^{-\lambda\theta_0} -a\lambda\int
_0^t \mathbb{E}\bigl[\mathrm{e}^{-\lambda\theta_{u}}
\theta_{u}\bigr]\, \mathrm{d}u + \frac{\lambda^2}{2}\int_0^t
\mathbb{E}\bigl[\mathrm{e}^{-\lambda\theta_{u}}\bigl(\theta^2_{u}-1\bigr)
\bigr]\, \mathrm{d}u.
\end{equation}
Define $p(t,\lambda):= \mathbb{E}\mathrm{e}^{-\lambda\theta_{t}}$, so the
function $p$ is bounded. Using (\ref{eqTRHH}), we deduce that $p$
satisfies the following partial differential equation:
%
\begin{equation}
\label{eqrrp} \frac{\partial p}{\partial t} = -\frac{\lambda^2}{2}p + a\lambda
\frac{\partial p}{\partial\lambda} + \frac{\lambda^2}{2}\frac
{\partial^2 p}{\partial\lambda^2},
\end{equation}
with $p(0,\lambda) = \mathrm{e}^{-\lambda\theta_0}$. Set $V_t =
V_0\mathrm{e}^{(\alpha+{1}/{2}) t + B_t}$. Then
\[
\mathrm{d}V_t = V_t\,\mathrm{d}B_t + aV_t\,\mathrm{d}t
\]
with $a=\alpha+1$. The generator of $V$ is
\[
\mathcal{A}_V = \frac{x^2}{2}\frac{\mathrm{d}^2}{\mathrm{d}x^2} + ax
\frac{\mathrm{d}}{\mathrm{d}x}.
\]
Observe, by \eqref{eqTRHH}, that $p\in
C^{1,2}((0,\infty)\times(0,\infty))$
and from the
Feynman--Kac theorem (see \cite{KS}, Chapter 5, Theorem 7.6) we know
that the only bounded solution of the partial differential equation
\[
\frac{\partial u}{\partial t} = \mathcal{A}_V u - \frac{x^2}{2}u,\qquad u(0,x)
= \mathrm{e}^{-\theta_0 x},
\]
that is, \eqref{eqrrp}, admits the stochastic representation
\[
u(t,x) = \mathbb{E} \exp \biggl(-\theta_0V_t-
\frac{1}{2}\int_0^t V_u^2\, \mathrm{d}u
\biggr),
\]
where $V_0 = x$. This implies the assertion of the theorem.
\end{pf}
As an easy consequence, we obtain
the following proposition.
%
\begin{proposition} \label{VBM} The Laplace transform of the vector
$(\mathrm{e}^{B_t},\int_0^t\mathrm{e}^{2B_u}\, \mathrm{d}u)$ is given by
%
\begin{equation}
\label{eqVBM} \mathbb{E}\exp \biggl(-\gamma \mathrm{e}^{B_t} - \frac{\lambda^2}{2}
\int_0^t\mathrm{e}^{2B_u}\, \mathrm{d}u \biggr) =
\mathbb{E}\exp \bigl(-\lambda\cosh(x + B_t) \bigr)
\end{equation}
for $\gamma>0$ and $\lambda>0$, where $x = \arcosh
\frac{\gamma}{\lambda}$.
\end{proposition}
\begin{pf}
The conclusion follows by applying Theorem \ref{twHB} with $\alpha=
-\frac{1}{2}$,
and the observation that $\cosh(R_t)=\cosh(x + B_t)$, since $\cosh$ is
an even function.
This implies that for any $\lambda>0$,
\[
\mathbb{E}\exp \bigl(-\lambda\cosh(x + B_t) \bigr) = \mathbb{E}\exp
\bigl(-\lambda\cosh(R_t) \bigr) .
\]
\upqed\end{pf}

\begin{remark}
The form of the Laplace transform of the vector
$(\mathrm{e}^{B_t},\int_0^t\mathrm{e}^{2B_u}\, \mathrm{d}u)$ in the last proposition, that is,
\eqref{eqVBM}, is very simple when compared to the form of
density of this vector obtained by Matsumoto and Yor \cite{MatII}.
Indeed, the density given in \cite{MatII} has the oscillating
nature in the neighbourhood of $0$ and is not convenient for
computational use (see \cite{BRY}). The knowledge of the Laplace
transform makes it possible to invert it numerically and obtain the
density of $(\mathrm{e}^{B_t},\int_0^t\mathrm{e}^{2B_u}\, \mathrm{d}u)$. It is also important to
realize that it enables one to obtain the density for $(\mathrm{e}^{B^{(\mu)}_t},
\int_0^t\mathrm{e}^{2B^{(\mu)}_u}\,\mathrm{d}u)$, where $B^{(\mu)}_t := B_t +\mu t$,
$\mu\in\mathbb{R}$, as we know from \cite{MatII} that
\[
\mathbb{P} \biggl(B^{(\mu)}_t \in \mathrm{d}x , \int
_0^t\mathrm{e}^{2B^{(\mu)}_u}\, \mathrm{d}u\in \mathrm{d}y \biggr) =
\mathrm{e}^{\mu x -\mu^2t / 2}\mathbb{P} \biggl(B_t \in \mathrm{d}x , \int
_0^t\mathrm{e}^{2B_u}\, \mathrm{d}u\in \mathrm{d}y \biggr).
\]
\end{remark}
%
\begin{theorem} \label{twHBN}
Let $k\in\mathbb{N}$, $\gamma>0$, $\lambda>0$ and $R$ be a
hyperbolic Bessel process
%
\begin{equation}
R_t = \arcosh(\gamma/\lambda) + B_t + k\int
_0^t\coth R_u \, \mathrm{d}u.
\end{equation}
For $\lambda> 0$, we have
\[
\mathbb{E}\exp (-\lambda\cosh R_t ) =(-1)^k
\mathrm{e}^{({k^2}/{2})t}\frac{\partial^{k}p(\gamma,\lambda
)}{\partial\gamma^k},
\]
where
\[
p(\gamma,\lambda) = \mathbb{E}\mathrm{e}^{-\lambda\cosh(\arcosh(\gamma
/\lambda)
+B_t)}.
\]
\end{theorem}
\begin{pf}
By Proposition
\ref{VBM}, for $x = \arcosh(\gamma/\lambda)$,
\[
p(\gamma,\lambda) = \mathbb{E} \mathrm{e}^{-\lambda\cosh(x + B_t)} = \mathbb{E}\exp \biggl(-\gamma
\mathrm{e}^{B_t} - \frac{\lambda^2}{2}\int_0^t\mathrm{e}^{2B_u}\, \mathrm{d}u
\biggr).
\]
Define the new probability measure $\mathbb{Q}$ by
\[
\frac{d\mathbb{Q}}{d\mathbb{P}} \bigg|_{\mathcal{F}_t} = \mathrm{e}^{k B_t - ({k^2}/{2})t}.
\]
Then $V_t = B_t - k t$ is a standard Brownian motion under
$\mathbb{Q}$ and
\begin{eqnarray*}
p(\gamma,\lambda)&=& \mathbb{E}_{\mathbb{Q}} \exp \biggl(-k B_t +
\frac
{k^2}{2}t-\gamma \mathrm{e}^{B_t} - \frac{\lambda^2}{2}\int
_0^t\mathrm{e}^{2B_u}\, \mathrm{d}u \biggr)
\\
&=& \mathrm{e}^{({k^2}/{2})t}\mathbb{E}_{\mathbb{Q}}\exp \biggl(-k (V_t +k
t) -\gamma \mathrm{e}^{(V_t +k t)} - \frac{\lambda^2}{2}\int_0^t
\mathrm{e}^{2(V_u +k u)}\, \mathrm{d}u \biggr).
\end{eqnarray*}
The result follows from Theorem \ref{twHB} after taking the $k$th
derivative of $p$ with respect to~$\gamma$.
\end{pf}
%
\begin{proposition} If $k\in\mathbb{N}$ and $Y_t = \mathrm{e}^{B_t + k t}$, then
for $\lambda>0, \gamma>0$,
\[
\mathbb{E}\exp \biggl(-\gamma Y_t - \frac{\lambda^2}{2} \int
_0^tY_u^2\, \mathrm{d}u \biggr) =
(-1)^k\mathrm{e}^{({k^2}/{2})t} \frac{\partial^{k}p(\gamma,\lambda)}{\partial
\gamma^k},
\]
where $p(\gamma,\lambda)$ is as in Theorem \ref{twHBN}.
\end{proposition}
\begin{pf}
This follows from Theorems \ref{twHBN} and \ref{twHB}.
\end{pf}

\begin{theorem} \label{twHBB}
If $R$ is a hyperbolic Bessel process
of index $\alpha\geq- \frac12$ starting from $x$,
then for $\lambda> 0$,
\[
\mathbb{E}\exp (-\lambda\cosh R_t ) = \frac{\sqrt{2\uppi}}{\sqrt{t}}\mathrm{e}^{-({a^2t}/{2})}
\mathbb{E} \bigl(1_{\{V_t\geq|B_t|\}}V_t h_t(B_t,x)J_0
\bigl(\lambda\phi (B_t,V_t)\bigr) \bigr),
\]
where $a = \alpha+\frac{1}{2}$, $J_0$ is the Bessel function of the
first kind of order $0$, $h_t(z,x) = \exp(\frac{z^2}{2t} + az -
\lambda\cosh(x) \mathrm{e}^{z}) $, $\phi(v,z) = \sqrt{2}\mathrm{e}^{v/2}(\cosh z - \cosh
v)^{1/2}$ for $z \geq|v|$, and $V,B$ are two independent standard
Brownian motions.
\end{theorem}
\begin{pf} Let $W_t = B_t + at$ and let $\mathbb{Q}$ be the new
probability measure given by
\[
\frac{d\mathbb{Q}}{d\mathbb{P}} \bigg|_{\mathcal{F}_t} = \mathrm{e}^{-a B_t -
({a^2}/{2})t}.
\]
From Theorem \ref{twHB}, we have
\begin{eqnarray*}
\mathbb{E}\exp (-\lambda\cosh R_t ) &=& \mathbb{E}\exp \biggl(-
\lambda\cosh(x) \mathrm{e}^{at + B_t} -\frac{\lambda^2}{2}\int_0^t\mathrm{e}^{2(B_u+au)}\, \mathrm{d}u
\biggr)
\\
&=& \mathbb{E}_{\mathbb{Q}}\exp \biggl(aW_t-\frac{a^2}{2}t -
\lambda\cosh (x) \mathrm{e}^{W_t} -\frac{\lambda^2}{2}\int_0^t\mathrm{e}^{2W_u}\, \mathrm{d}u
\biggr)
\\
&=& \mathrm{e}^{-({a^2}/{2})t}\mathbb{E}\exp \biggl(aB_t - \lambda\cosh(x)
\mathrm{e}^{B_t} -\frac{\lambda^2}{2}\int_0^t\mathrm{e}^{2B_u}\, \mathrm{d}u
\biggr),
\end{eqnarray*}
so
\[
\mathbb{E}\exp (-\lambda\cosh R_t ) = \mathrm{e}^{-({a^2}/{2})t}\mathbb {E}
\bigl[\mathrm{e}^{aB_t -
\lambda\cosh(x)\mathrm{e}^{B_t}}\mathbb{E} \bigl(\mathrm{e}^{ -({\lambda^2}/{2})\int
_0^t\mathrm{e}^{2B_u}\, \mathrm{d}u}|B_t \bigr)
\bigr].
\]
To finish the proof, we use the conditional Laplace transform (see
(5.5) in \cite{MatII})
\[
\mathbb{E} \bigl(\mathrm{e}^{ -({\lambda^2}/{2})\int_0^t\mathrm{e}^{2B_u}\, \mathrm{d}u}|B_t = x \bigr)
\frac{1}{\sqrt{2\uppi t}}\mathrm{e}^{-{x^2}/({2t})} = \int_{|x|}^{\infty}
\frac{z}{\sqrt{2\uppi t^3}}\mathrm{e}^{-
{z^2}/({2t})}J_0\bigl(\lambda\phi(x,z)\bigr)\, \mathrm{d}z.
\]
\upqed\end{pf}
Now we define a process $\Gamma^{(\alpha)}$.
Let $B$ be a standard Brownian motion, $\alpha\in\mathbb{R}$,
$\lambda> 0$ and
%
\begin{equation}
\label{defal} \Gamma_t^{(\alpha)} = \frac{\mathrm{e}^{B_t + \alpha t}}{1+\lambda\int_0^t\mathrm{e}^{B_u+\alpha u}\, \mathrm{d}u}.
\end{equation}
%
\begin{theorem}
\label{twGALP}
If $R$ is a hyperbolic Bessel process of index $\alpha\geq- \frac12$
starting from $x$,
then for $\lambda> 0$,
\[
\mathbb{E}\exp (-\lambda\cosh R_t ) = \mathrm{e}^{-\lambda}\mathbb{E}
\biggl[\mathrm{e}^{-\lambda(\cosh(x)-1)\Gamma_t^{(\alpha
+1/2)}} \biggl(1+\lambda\int_0^t\mathrm{e}^{B_u+(\alpha+1/2) u}\, \mathrm{d}u
\biggr)^{-\alpha
-1} \biggr],
\]
where $B$ is a standard Brownian motion and $\Gamma_t^{(\alpha)}$ is
given by \eqref{defal}.
\end{theorem}
\begin{pf} Set $Y_t^{(\alpha)} = \mathrm{e}^{B_t +\alpha t}$. Define the new
measure $\mathbb{Q}$ by
\[
\frac{d\mathbb{Q}}{d\mathbb{P}} \bigg|_{\mathcal{F}_t} = \mathrm{e}^{-\lambda\int
_0^tY^{(\alpha+1/2)}_u\,\mathrm{d}B_u - ({\lambda^2}/{2})\int_0^t(Y^{(\alpha
+1/2)}_u)^2\, \mathrm{d}u}.
\]
Theorem \ref{twHB} implies
\begin{eqnarray*}
\mathbb{E}\mathrm{e}^{-\lambda\cosh R_t} &=& \mathbb{E} \bigl[\mathrm{e}^{-\cosh(x)
\lambda Y^{(\alpha+1/2)}_t -({\lambda^2}/{2})\int_0^t(Y^{(\alpha
+1/2)}_u)^2\, \mathrm{d}u} \bigr]
\\
&=& \mathbb{E}_{\mathbb{Q}} \bigl[\mathrm{e}^{-\cosh(x) \lambda Y^{(\alpha+1/2)}_t
+ \lambda\int_0^tY^{(\alpha+1/2)}_u\,\mathrm{d}B_u} \bigr]
\\
&=& \mathbb{E}_{\mathbb{Q}} \bigl[\mathrm{e}^{-\cosh(x) \lambda Y^{(\alpha+
1/2)}_t +
\lambda(Y^{(\alpha+1/2)}_t-1) -
(\alpha+1)\lambda\int_0^tY^{(\alpha+1/2)}_u\,\mathrm{d}u} \bigr]:= I,
\end{eqnarray*}
where we have used the fact that
%
\begin{equation}
Y_t^{(\alpha+1/2)} = 1 + \int_0^tY^{(\alpha+1/2)}_u\,\mathrm{d}B_u
+ (\alpha+1)\int_0^tY^{(\alpha+1/2)}_u\,\mathrm{d}u.
\end{equation}
From the Girsanov theorem, the process $V_t = B_t +
\lambda\int_0^tY^{(\alpha+1/2)}_u\,\mathrm{d}u$ is a standard Brownian
motion under $\mathbb{Q}$. By the result of Alili, Matsumoto, and
Shiraishi \cite{AMY}, Lemma 3.1, we have
%
\begin{equation}
B_t = V_t - \ln \biggl(1 +\lambda\int
_0^t\mathrm{e}^{V_u + (\alpha+1/2)u}\, \mathrm{d}u \biggr).
\end{equation}
Thus,
\[
Y^{(\alpha+1/2)}_t =\frac{\mathrm{e}^{V_t + (\alpha+1/2)
t}}{1+\lambda\int_0^t\mathrm{e}^{V_u+(\alpha+1/2) u}\, \mathrm{d}u},
\]
and
\begingroup
\abovedisplayskip=7pt
\belowdisplayskip=7pt
\begin{eqnarray*}
I &=& \mathrm{e}^{-\lambda}\mathbb{E}_{\mathbb{Q}} \biggl[\exp\biggl({-\lambda\bigl(\cosh
(x)-1\bigr)\frac{\mathrm{e}^{V_t +
(\alpha+1/2) t}}{1+\lambda\int_0^t\mathrm{e}^{V_u+(\alpha+1/2) u}\, \mathrm{d}u}-
(\alpha+1)(V_t-B_t)}\biggr) \biggr]
\\
&=& \mathrm{e}^{-\lambda}\mathbb{E} \biggl[\mathrm{e}^{-\lambda(\cosh(x)-1)\Gamma_t^{(\alpha
+1/2)}} \biggl(1+\lambda \int
_0^t\mathrm{e}^{B_u+(\alpha+1/2) u}\, \mathrm{d}u \biggr)^{-\alpha-1}
\biggr].
\end{eqnarray*}
\upqed\end{pf}

Let us observe that for a hyperbolic Bessel process of index $\alpha
\geq- \frac12$ starting from $x=0$ the expectation $\mathbb{E}[\exp
(-\lambda\cosh R_t)]$ involves the random variable
$\int_0^t\mathrm{e}^{B_u+(\alpha+1/2) u}\, \mathrm{d}u$ and not $\Gamma_t^{(\alpha)}$.

Taking $\alpha= - 1/2$ in the previous result, we obtain
the following corollary.
%
\begin{corollary}
For $\lambda> 0$, $x\geq0$,
\[
\mathbb{E} \bigl[\exp \bigl(-\lambda\cosh(B_t+x) \bigr) \bigr] =
\mathrm{e}^{-\lambda}\mathbb{E} \biggl[\mathrm{e}^{-\lambda(\cosh(x)-1)\Gamma_t^{(0)} } \biggl(1+\lambda\int
_0^t \mathrm{e}^{B_u}\, \mathrm{d}u \biggr)^{-1/2}
\biggr].
\]
\end{corollary}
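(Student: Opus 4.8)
The plan is to obtain this corollary as the direct specialization of Theorem \ref{tw:GALP} to the boundary index $\alpha = -\frac{1}{2}$, so essentially no new argument is needed beyond careful bookkeeping. The first step is to recall the identification made in Section 2: a hyperbolic Bessel process of the form \eqref{eq:hbp1} with $\alpha = -\frac{1}{2}$ is exactly a standard Brownian motion started from $x$, so that $R_t = B_t + x$ and hence $\cosh R_t = \cosh(B_t + x)$. This matches the left-hand side of the corollary with the left-hand side of Theorem \ref{tw:GALP} evaluated at $\alpha = -\frac{1}{2}$.

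The second step is to substitute $\alpha = -\frac{1}{2}$ into the right-hand side of Theorem \ref{tw:GALP} and simplify the three places where $\alpha$ occurs. The superscript of the auxiliary process becomes $\alpha + \frac{1}{2} = 0$, so $\Gamma_t^{(\alpha + 1/2)} = \Gamma_t^{(0)}$; the drift inside the integral vanishes, giving $e^{B_u + (\alpha + 1/2)u} = e^{B_u}$; and the outer exponent becomes $-\alpha - 1 = -\frac{1}{2}$. Collecting these substitutions reproduces exactly the claimed expression $e^{-\lambda}\mathbb{E}[e^{-\lambda(\cosh(x)-1)\Gamma_t^{(0)}}(1 + \lambda\int_0^t e^{B_u}du)^{-1/2}]$.

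I expect no genuine obstacle here, since the statement is an immediate instance of an already-established theorem. The only point requiring care is the simultaneous tracking of the three $\alpha$-dependent ingredients (the index of $\Gamma$, the exponential drift $e^{(\alpha + 1/2)u}$ in the integrand, and the power $-\alpha - 1$), together with the degeneracy observation that at $\alpha = -\frac{1}{2}$ the hyperbolic Bessel process collapses to Brownian motion, which is what puts the left-hand side into the form $\mathbb{E}[\exp(-\lambda\cosh(B_t + x))]$.
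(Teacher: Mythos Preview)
Your proposal is correct and follows exactly the paper's approach: the corollary is obtained by specializing Theorem \ref{tw:GALP} to $\alpha = -\tfrac{1}{2}$, using that the hyperbolic Bessel process then reduces to the Brownian motion $R_t = x + B_t$. The paper in fact gives no separate proof beyond the one-line remark ``Considering $\alpha = -1/2$ in the previous result we obtain,'' so your careful tracking of the three $\alpha$-dependent ingredients simply makes explicit what the paper leaves implicit.
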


We will use the following notation:
%
\begin{equation}
\label{wzor-na-A} A_t^{(\alpha)} = \int_0^t
\mathrm{e}^{2(B_u + \alpha u)}\, \mathrm{d}u,\qquad \alpha\in \mathbb{R},\qquad A_t=A_t^{(0)},
\end{equation}
with $B$ being a
standard Brownian motion.
%
\begin{theorem}\label{Beshalf}
If $R$ is a hyperbolic Bessel process
of index $\alpha=0$ and $x=0$,
then the density of $\cosh R_t$ on $[1, \infty)$, $t>0$, has the form
%
\begin{equation}
\mathbb{P}(\cosh R_t\in \mathrm{d}z) = \mathbb{E} \biggl[ \frac{1}{4A_{t/4}^{(1)}}
\exp \biggl(-\frac{z-1}{4A_{
t/4}^{(1)}} \biggr) \biggr]\, \mathrm{d}z.
\end{equation}
\end{theorem}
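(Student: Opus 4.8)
The plan is to reduce the statement to an identity between Laplace transforms and then close it with a Brownian scaling argument. The natural starting point is Theorem \ref{tw:GALP}, specialized to $\alpha = 0$ and $x = 0$. Since $\cosh(0) - 1 = 0$ and $-\alpha - 1 = -1$, the factor $e^{-\lambda(\cosh x - 1)\Gamma^{(\alpha+1/2)}_t}$ collapses to $1$ and the power becomes $-1$, so that
\[
\mathbb{E}\exp(-\lambda \cosh R_t) = e^{-\lambda}\,\mathbb{E}\Big[\Big(1 + \lambda \int_0^t e^{B_u + u/2}\,du\Big)^{-1}\Big].
\]
(One could equally start from Theorem \ref{tw:HB}, but the representation in Theorem \ref{tw:GALP} already exhibits the resolvent-type form $(1+\lambda J)^{-1}$ that matches an exponential law, so it is the more convenient entry point.)

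Next I would compute the Laplace transform of the conjectured density. For a fixed value $A := A_{t/4}^{(1)}$, the measure $\tfrac{1}{4A}\exp(-(z-1)/(4A))\,dz$ on $[1,\infty)$ is the law of $1$ plus an exponential variable of mean $4A$, whose Laplace transform is $e^{-\lambda}(1 + 4\lambda A)^{-1}$. Taking expectation over the randomness of $A$ yields
\[
\int_1^\infty e^{-\lambda z}\,\mathbb{E}\Big[\frac{1}{4A_{t/4}^{(1)}}\exp\Big(-\frac{z-1}{4A_{t/4}^{(1)}}\Big)\Big]\,dz = e^{-\lambda}\,\mathbb{E}\Big[\frac{1}{1 + 4\lambda A_{t/4}^{(1)}}\Big].
\]
Thus the entire claim reduces to verifying, for every $\lambda>0$,
\[
\mathbb{E}\Big[\Big(1 + \lambda \int_0^t e^{B_u + u/2}\,du\Big)^{-1}\Big] = \mathbb{E}\Big[\frac{1}{1 + 4\lambda A_{t/4}^{(1)}}\Big].
\]

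The key step, and the place where the real work lies, is the distributional scaling identity
\[
\int_0^t e^{B_u + u/2}\,du \stackrel{d}{=} 4\int_0^{t/4} e^{2(B_u + u)}\,du = 4 A_{t/4}^{(1)}.
\]
I would establish it by the substitution $u \mapsto u/4$ in the right-hand integral, which rewrites $4\int_0^{t/4} e^{2(B_u+u)}\,du$ as $\int_0^t e^{2B_{v/4} + v/2}\,dv$, followed by the Brownian scaling property $\{2B_{v/4}\}_{v\ge 0} \stackrel{d}{=} \{\widetilde B_v\}_{v \ge 0}$ for a standard Brownian motion $\widetilde B$. In effect the rescaling converts the drift-$1$ squared exponential over $[0,t/4]$ into a drift-$\tfrac12$ exponential over $[0,t]$, which is precisely the functional on the left. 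The main obstacle is bookkeeping care with this change of time: one must track simultaneously the factor $4$, the contraction of the horizon from $t$ to $t/4$, and the shift of the drift index from $1$ (inside $A^{(1)}$) to $\tfrac12$ (inside the integrand $e^{B_u+u/2}$), making sure the quadratic variation scales consistently so that $2B_{v/4}$ really is a standard Brownian motion in $v$.

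Finally, assembling the three ingredients: the identity of the two expectations gives $\mathbb{E}\exp(-\lambda\cosh R_t) = e^{-\lambda}\mathbb{E}[(1+4\lambda A_{t/4}^{(1)})^{-1}]$, which is exactly the Laplace transform of the claimed density. Since $\cosh R_t \ge 1$, both sides are Laplace transforms of finite measures supported on $[1,\infty)$, so uniqueness of the Laplace transform forces the law of $\cosh R_t$ to have the stated density.
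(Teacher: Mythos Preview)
Your proof is correct and follows the same overall strategy as the paper: start from Theorem~\ref{tw:GALP} with $\alpha=x=0$, apply Brownian scaling to rewrite $\int_0^t e^{B_u+u/2}\,du$ as $4A_{t/4}^{(1)}$, and then identify the Laplace transform of $\cosh R_t$ with that of the proposed density. The one genuine difference is in this last identification. The paper invokes an external result (Theorem~2.8 of \cite{jw}) to express $\mathbb{E}[(1+4\lambda A_t^{(1)})^{-1}]$ via $G_t(y)=\mathbb{E}\,e^{-y/A_t^{(1)}}$, then integrates by parts and differentiates $G_t$ to recover the density. You instead compute the Laplace transform of the claimed density directly, conditioning on $A_{t/4}^{(1)}$ and using that $\tfrac{1}{4A}e^{-(z-1)/(4A)}\,dz$ is a shifted exponential law with Laplace transform $e^{-\lambda}(1+4\lambda A)^{-1}$, then applying Fubini. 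Your route is more elementary and self-contained; the paper's route has the side benefit of connecting the density to $-\tfrac14 G'_{t/4}$, which it exploits in the proposition immediately following.
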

\begin{pf} Fix $t>0$.
Using Theorem \ref{twGALP} for $\alpha= x = 0$ and $\lambda> 0$ we obtain
\[
\mathbb{E}\mathrm{e}^{-\lambda(\cosh R_t-1)} = \mathbb{E} \biggl[\frac{1}{1+\lambda\int_0^t\mathrm{e}^{B_u
+{u}/{2}}\, \mathrm{d}u} \biggr].
\]
From the last equality and the scaling property of Brownian motion, we have
%
\begin{equation}
\label{Hyperzero} \mathbb{E}\mathrm{e}^{-\lambda(\cosh R_{4t}-1)} = \mathbb{E} \biggl[
\frac
{1}{1+4\lambda\int_0^t\mathrm{e}^{B_{4u} +2u}\, \mathrm{d}u} \biggr] = \mathbb{E} \biggl[\frac{1}{1+4\lambda A_t^{(1)}} \biggr].
\end{equation}
From Theorem 2.8 in \cite{jw}, we know that
\[
\mathbb{E} \biggl[\frac{1}{1+4\lambda A_t^{(1)}} \biggr] = 1 - 4\lambda\int
_0^{\infty}G_t(y)\mathrm{e}^{-4\lambda
y}\, \mathrm{d}y,
\]
where $G_t(y) = \mathbb{E} (\mathrm{e}^{-y/A_t^{(1)}} )$.
\endgroup

Since $G_t$ is differentiable,
integration by parts yields
\[
\mathbb{E} \biggl[\frac{1}{1+4\lambda A_t^{(1)}} \biggr]= 1 + \int_0^{\infty}G_t(y)
\bigl( \mathrm{e}^{-4\lambda y}\bigr)' \,\mathrm{d}y = -\int_0^{\infty}G_t'(y)\mathrm{e}^{-4\lambda y}\,\mathrm{d}y.
\]
From the last equality and the definition of $G_t$, we have
%
\begin{eqnarray}
\label{InvMom} \mathbb{E} \biggl[\frac{1}{1+4\lambda A_t^{(1)}} \biggr]&=& \int
_0^{\infty
}\mathrm{e}^{-4\lambda y}\mathbb{E} \biggl[
\frac{1}{A_t^{(1)}}\exp \biggl(-\frac{y}{A_t^{(1)}} \biggr) \biggr] \,\mathrm{d}y
\nonumber
\\[-8pt]\\[-8pt]
&= &\int_0^{\infty}\mathrm{e}^{-\lambda
z}\mathbb{E}
\biggl[\frac{1}{4A_t^{(1)}}\exp \biggl(-\frac{z}{4A_t^{(1)}} \biggr) \biggr]\,\mathrm{d}z.\nonumber
\end{eqnarray}
From (\ref{Hyperzero}) and (\ref{InvMom}), we conclude that
\[
\mathbb{P} \bigl((\cosh R_{4t}-1)\in \mathrm{d}z \bigr) = \mathbb{E} \biggl[
\frac
{1}{4A_t^{(1)}}\exp \biggl(-\frac{z}{4A_t^{(1)}} \biggr) \biggr]\,\mathrm{d}z
\]
for $z\geq0$, which ends the proof.
\end{pf}
%
\begin{remark}
Using the explicit form of the density of $A_t^{(1)}$ and Theorem
\ref{Beshalf}, we can obtain the integral form of the density of $\cosh
R_t$, with $\alpha=0$ and $x=0$.
\end{remark}
%
\begin{proposition}
If $R$ is a hyperbolic Bessel process of index
$\alpha=0$ and $x=0$,
then on $[1,\infty)$,
%
\begin{equation}
\mathbb{P}(\cosh R_t\in \mathrm{d}z) = -\frac{1}{4}G'_{t/4}
\biggl(\frac{z
{-1}}{4} \biggr)\,\mathrm{d}z
\end{equation}
for $t>0$, where
%
\begin{equation}
\label{wzor-g} G_t(y) = \mathrm{e}^{-t/2}\mathbb{E}\exp
\biggl(B_t +\frac{1}{2t} \bigl(B_t^2 -
\varphi^2_y(B_t) \bigr) \biggr),
\end{equation}
$B$ is a standard Brownian motion and
%
\begin{equation}
\label{varphi} \varphi_y(z) = \ln \bigl(y\mathrm{e}^{-z} +
\cosh(z)+\sqrt{y^2\mathrm{e}^{-2z}+\sinh^2(z)
+2y\mathrm{e}^{-z}\cosh(z)} \bigr).
\end{equation}
\end{proposition}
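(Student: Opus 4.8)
The plan is to reduce everything to the single function $G_s(y)=\mathbb{E}\big(e^{-y/A_s^{(1)}}\big)$ already introduced in the proof of Theorem~\ref{Beshalf}, and then to evaluate it explicitly. First I would record that, by Theorem~\ref{Beshalf}, the Lebesgue density of $\cosh R_t$ at $z\in[1,\infty)$ equals $\mathbb{E}\big[\tfrac{1}{4A_{t/4}^{(1)}}\exp\big(-\tfrac{z-1}{4A_{t/4}^{(1)}}\big)\big]$. Since for $y>0$ the map $a\mapsto \tfrac1a e^{-y/a}$ is bounded by $1/(ey)$, dominated convergence lets me differentiate under the expectation, giving $G_s'(y)=-\mathbb{E}\big[\tfrac{1}{A_s^{(1)}}e^{-y/A_s^{(1)}}\big]$. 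Hence the density equals $-\tfrac14 G_{t/4}'\big(\tfrac{z-1}{4}\big)$, which is the first assertion of the Proposition. It remains to prove formula \eqref{wzor-g} for $G_s$.

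Next I would simplify \eqref{varphi}: completing the square under the radical shows that $\varphi_y(x)$ is exactly the nonnegative number determined by $\cosh\varphi_y(x)=\cosh x+ye^{-x}$. Then I would pass to the driftless functional $A_s=A_s^{(0)}$ of \eqref{wzor-na-A}. By the absolute-continuity relation recalled in the Remark following Proposition~\ref{VBM} (with drift $\mu=1$), one has $\mathbb{E}\,g(A_s^{(1)})=e^{-s/2}\mathbb{E}\big(e^{B_s}g(A_s)\big)$; taking $g(a)=e^{-y/a}$ yields $G_s(y)=e^{-s/2}\mathbb{E}\big(e^{B_s}e^{-y/A_s}\big)$. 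Conditioning on $B_s$, formula \eqref{wzor-g} is then equivalent to the single identity
\[
\mathbb{E}\big(e^{-y/A_s}\mid B_s=x\big)=\exp\!\Big(\frac{x^2-\varphi_y^2(x)}{2s}\Big),
\]
since substituting this (and writing $p_s(x)=\tfrac{1}{\sqrt{2\pi s}}e^{-x^2/(2s)}$) gives $G_s(y)=e^{-s/2}\int_{\mathbb{R}}e^{x}p_s(\varphi_y(x))\,dx=e^{-s/2}\mathbb{E}\exp\big(B_s+\tfrac{1}{2s}(B_s^2-\varphi_y^2(B_s))\big)$.

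The core step is this conditional identity, which I would establish by a Fokker--Planck argument together with a verification. Let $\rho_s(x,a)$ be the joint density of $(B_s,A_s)$; as $A$ has drift $e^{2x}$ and no diffusion, $\rho$ solves $\partial_s\rho=\tfrac12\partial_{xx}\rho-e^{2x}\partial_a\rho$. Setting $\Psi(s,x,y)=\int_0^\infty e^{-y/a}\rho_s(x,a)\,da=\mathbb{E}(e^{-y/A_s};B_s\in dx)/dx$ and integrating by parts in $a$ (the boundary terms vanish) turns this into the linear parabolic equation
\[
\partial_s\Psi=\tfrac12\partial_{xx}\Psi+y\,e^{2x}\,\partial_{yy}\Psi .
\]
I would then check that $p_s(\varphi_y(x))$ solves it. Writing $\varphi=\varphi_y(x)$ and differentiating $\cosh\varphi=\cosh x+ye^{-x}$ gives $\partial_x\varphi=(\sinh x-ye^{-x})/\sinh\varphi$ and $\partial_y\varphi=e^{-x}/\sinh\varphi$, and a short computation based on the sub-identity $\sinh^2\varphi-(\sinh x-ye^{-x})^2=2y$ produces the two clean relations $(\partial_x\varphi)^2+2ye^{2x}(\partial_y\varphi)^2=1$ and $\partial_{xx}\varphi+2ye^{2x}\partial_{yy}\varphi=0$. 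Feeding these into the $s$-, $x$- and $y$-derivatives of $p_s(\varphi_y(x))$ makes the $s^{-2}$ and $s^{-1}$ terms cancel exactly, so $p_s(\varphi_y(x))$ solves the equation; as it matches the (degenerate) data of $\Psi$ as $s\downarrow 0$, uniqueness for the parabolic problem forces $\Psi(s,x,y)=p_s(\varphi_y(x))$, i.e.\ the conditional identity. Combining the three steps gives the Proposition.

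The main obstacle, I expect, is precisely this conditional identity: correctly deriving the equation for $\Psi$ (getting the integration-by-parts and the passage to $y$-derivatives right) and, more delicately, justifying uniqueness despite the degeneracy of the coefficient $ye^{2x}$ as $x\to-\infty$ and the singular behaviour of $e^{-y/A_s}$ as $s\downarrow 0$. By contrast, once the two hyperbolic relations above are in hand, the verification that $p_s(\varphi_y(x))$ solves the PDE is routine, and the reductions in the first two paragraphs are elementary.
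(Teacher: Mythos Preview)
Your first reduction --- identifying the density from Theorem~\ref{Beshalf} with $-\tfrac14 G_{t/4}'(\tfrac{z-1}{4})$ via differentiation under the expectation --- is exactly what the paper does. The divergence is in the second part. The paper does not prove \eqref{wzor-g} at all: it simply observes that $G_t(y)=\mathbb{E}\,e^{-y/A_t^{(1)}}$ and cites Matsumoto--Yor \cite[Theorem~5.6]{MatII} (via \cite[Theorem~2.5]{jw}) for the explicit expression. The conditional identity
\[
\mathbb{E}\big(e^{-y/A_s}\mid B_s=x\big)=\exp\!\Big(\frac{x^2-\varphi_y^2(x)}{2s}\Big)
\]
that you isolate and prove by a Fokker--Planck / verification argument \emph{is} precisely the Matsumoto--Yor formula the paper is quoting. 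So your approach is a genuine, self-contained re-derivation of that lemma rather than a different proof strategy for the Proposition. What you gain is independence from the cited literature and a transparent explanation of why $\varphi_y$ (as $\operatorname{arcosh}(\cosh x+ye^{-x})$) appears; what the paper gains is brevity and the avoidance of the uniqueness issue you rightly flag. Your PDE computation is correct (the two relations $(\partial_x\varphi)^2+2ye^{2x}(\partial_y\varphi)^2=1$ and $\partial_{xx}\varphi+2ye^{2x}\partial_{yy}\varphi=0$ do hold and do force $p_s(\varphi_y(x))$ to solve the equation), but the uniqueness step for the degenerate parabolic problem is not trivial and would need either a maximum-principle argument tailored to the growth of $ye^{2x}$ or an appeal back to the probabilistic representation; alternatively, one can sidestep it entirely by citing \cite{MatII} as the paper does.
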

\begin{pf}
As before, let $G_t(y) =
\mathbb{E} (\mathrm{e}^{-y/A_t^{(1)}} )$. As an easy
consequence of the Matsumoto--Yor result \cite{MatII}, Theorem 5.6, we
find that $G_t$ is given by \eqref{wzor-g} (see the proof of
Theorem 2.5 in \cite{jw}). Now the assertion follows from Theorem
\ref{Beshalf} and the observation that
\[
\mathbb{E} \biggl[\frac{1}{4A_{t/4}^{(1)}}\exp \biggl(-\frac{z-1}{4A_{
t/4}^{(1)}} \biggr)
\biggr] = -\frac{1}{4} G'_{t/4} \biggl(
\frac{z {-1}}{4} \biggr).
\]
\upqed\end{pf}
Let us recall that the formula of Laplace transform of a squared Bessel process
$X$ of index $\alpha$ has the form
%
\begin{equation}
\label{eqBESQL} \mathbb{E}\mathrm{e}^{-\lambda X_t} = (1 + 2\lambda t)^{-(\alpha+1)}\exp
\bigl(-\lambda x/(1+2\lambda t)\bigr)
\end{equation}
for $\lambda> 0, t\geq0$ and $X_0 = x\geq0$ (see \cite{RY}, Chapter XI,
page 441).
We will use \eqref{eqBESQL} in the proof of the next theorem and subsequently.

The next theorem states that, for a fixed $t$, that $R_t \stackrel{\mathrm{(law)}} =
F(B,A,X)$ for some functional $F$, so the distribution of $R_t$ can
be represented as a functional of $A, B$ and a squared Bessel process
$X$ of index $\alpha$ independent of $B$.
%
\begin{theorem}\label{DHB} If $R$ is a hyperbolic Bessel process of
index $\alpha\geq- \frac12$, $R_0=x\geq0$,
then
%
\begin{equation}
\cosh(R_t) \stackrel{\mathrm{(law)}} = 1 + 2 A_{t/4}^{(2\alpha+1)}X_1
\end{equation}
for every $t>0$, where $X$ is a squared Bessel process of index
$\alpha$, independent of a standard Brownian motion $B$ and
starting from $\frac12(\cosh(x) -1)\mathrm{e}^{2B^{(2\alpha
+1)}_{t/4}}/A_{t/4}^{(2\alpha+1)}$.
Moreover, we have
%
\begin{equation}
R_t \stackrel{\mathrm{(law)}} = \arcosh \bigl(1 + 2
A_{t/4}^{(2\alpha+1)}X_1 \bigr).
\end{equation}
\end{theorem}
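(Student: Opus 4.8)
The plan is to establish the first identity by showing that $\cosh R_t - 1$ and $2A_{t/4}^{(2\alpha+1)}X_1$ share the same Laplace transform, and then to obtain the second identity by applying the increasing bijection $ar\cosh:[1,\infty)\to[0,\infty)$ to both sides.

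First I would compute the Laplace transform of the right-hand side by conditioning on the Brownian motion $B$ driving $A^{(2\alpha+1)}$. Since $X$ is independent of $B$, conditionally on $B$ the process $X$ is a squared Bessel process of dimension $\delta=2\alpha+2$ (index $\alpha$) started from the now-deterministic point $x_0=\tfrac12(\cosh(x)-1)e^{2B^{(2\alpha+1)}_{t/4}}/A_{t/4}^{(2\alpha+1)}$. I would invoke the classical Laplace transform of a squared Bessel process at time $1$, namely $\mathbb{E}_{x_0}[e^{-\mu X_1}]=(1+2\mu)^{-\delta/2}\exp\big(-\mu x_0/(1+2\mu)\big)$, with $\mu=2\lambda A_{t/4}^{(2\alpha+1)}$. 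Using $\delta/2=\alpha+1$ and $2A_{t/4}^{(2\alpha+1)}x_0=(\cosh(x)-1)e^{2B^{(2\alpha+1)}_{t/4}}$, and then taking expectation over $B$, I expect to reach
\[
\mathbb{E}\big[e^{-2\lambda A_{t/4}^{(2\alpha+1)}X_1}\big]=\mathbb{E}\Big[\big(1+4\lambda A_{t/4}^{(2\alpha+1)}\big)^{-\alpha-1}\exp\Big(-\frac{\lambda(\cosh(x)-1)e^{2B^{(2\alpha+1)}_{t/4}}}{1+4\lambda A_{t/4}^{(2\alpha+1)}}\Big)\Big].
\]

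Next I would handle the left-hand side starting from Theorem \ref{tw:GALP}, which already expresses $\mathbb{E}[e^{-\lambda(\cosh R_t-1)}]$ as an expectation involving $\Gamma_t^{(\alpha+1/2)}$ and $\big(1+\lambda\int_0^t e^{B_u+(\alpha+1/2)u}du\big)^{-\alpha-1}$. The decisive step is a Brownian scaling argument: writing $B_u=2\widetilde B_{u/4}$ in law (as a process) and substituting $u=4s$, I expect the joint identity in law
\[
\Big(e^{B_t+(\alpha+\frac12)t},\ \int_0^t e^{B_u+(\alpha+\frac12)u}du\Big)\stackrel{\rm (law)}{=}\Big(e^{2\widetilde B^{(2\alpha+1)}_{t/4}},\ 4\,\widetilde A_{t/4}^{(2\alpha+1)}\Big),
\]
where $\widetilde A$ is built from $\widetilde B$. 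Inserting this substitution into the formula of Theorem \ref{tw:GALP} transforms it term by term into precisely the displayed expectation for the right-hand side; hence the two Laplace transforms agree for every $\lambda>0$, and uniqueness gives $\cosh R_t\stackrel{\rm (law)}{=}1+2A_{t/4}^{(2\alpha+1)}X_1$.

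The main obstacle is keeping the scaling bookkeeping exact: one must verify that the factor $4$ (from $du=4\,ds$) and the factor $2$ in the exponent (from $B_{4s}=2\widetilde B_s$) conspire so that the single-exponential functional $\int_0^t e^{B_u+(\alpha+1/2)u}du$ becomes $4$ times the squared-exponential functional $A_{t/4}^{(2\alpha+1)}$, while $e^{B_t+(\alpha+1/2)t}$ becomes exactly $e^{2B^{(2\alpha+1)}_{t/4}}$ — the very combination forced by the prescribed starting point of $X$. Finally, for $\alpha>-1/2$ and $x\ge 0$ the process $R$ is nonnegative, so $ar\cosh(\cosh R_t)=R_t$; applying the continuous increasing map $ar\cosh$ to both sides of the distributional identity then yields $R_t\stackrel{\rm (law)}{=}ar\cosh\big(1+2A_{t/4}^{(2\alpha+1)}X_1\big)$.
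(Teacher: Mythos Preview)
Your proposal is correct and follows essentially the same route as the paper: both start from Theorem~\ref{tw:GALP}, invoke the explicit Laplace transform of a squared Bessel process, and use the Brownian scaling $B_{4s}\stackrel{\rm (law)}{=}2\widetilde B_s$ to turn $\int_0^t e^{B_u+(\alpha+1/2)u}du$ into $4A_{t/4}^{(2\alpha+1)}$. The only organizational difference is that the paper first identifies the expression from Theorem~\ref{tw:GALP} as the Laplace transform of $1+e^{B_t+(\alpha+1/2)t}\hat X_{\int_0^t e^{B_u+(\alpha+1/2)u}du/(2e^{B_t+(\alpha+1/2)t})}$ for a BESQ$^{\alpha}$ process $\hat X$ starting from $\cosh(x)-1$, then applies the BESQ scaling property to pass to $X_1$, and finally rescales Brownian motion; you instead compute both Laplace transforms separately and match them directly, which amounts to the same computation in a different order.
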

\begin{pf} From Theorem \ref{twGALP} and the form of the Laplace
transform of a squared Bessel process, that is, \eqref{eqBESQL},
we have
\begin{eqnarray*}
\mathbb{E}\exp (-\lambda\cosh R_t ) &=& \mathrm{e}^{-\lambda}\mathbb{E}
\biggl[\mathrm{e}^{-\lambda(\cosh(x)-1)\Gamma_t^{(\alpha
+1/2)}} \biggl(1+\lambda\int_0^t\mathrm{e}^{B_u+(\alpha+1/2) u}\,\mathrm{d}u
\biggr)^{-\alpha-1} \biggr]
\\
&=& \mathrm{e}^{-\lambda}\mathbb{E}\exp \bigl(-\lambda \mathrm{e}^{B_t+(\alpha+1/2)t}
\hat{X}_{\int_0^t\mathrm{e}^{B_u+(\alpha+1/2) u}\,\mathrm{d}u/(2\mathrm{e}^{B_t+(\alpha+1/2)t})} \bigr)
\end{eqnarray*}
for arbitrary
$\lambda>0$, where $\hat{X}$ is a squared Bessel process of index
$\alpha$,
independent of a standard Brownian motion $B$ and starting from $
\cosh(x) -1$.
It is now clear that
%
\begin{equation}
\label{eqlicz} \cosh(R_t) \stackrel{\mathrm{(law)}} = 1 + \mathrm{e}^{B_t+(\alpha+1/2)t}
\hat{X}_{\int_0^t\mathrm{e}^{B_u+(\alpha+1/2) u}\,\mathrm{d}u/(2\mathrm{e}^{B_t+(\alpha+1/2)t})}.
\end{equation}
Using the scaling property of a squared Bessel process (see
\cite{RY}, Chapter XI, Proposition 1.6), we obtain
%
\begin{equation}
\label{eqlicz1} 1 + \mathrm{e}^{B_t+(\alpha+1/2)t}\hat{X}_{\int_0^t\mathrm{e}^{B_u+(\alpha+1/2) u}\,\mathrm{d}u/(2\mathrm{e}^{B_t+
(\alpha+1/2)t})} = 1 + \frac{1}{2} {
\overline X}_1\int_0^t\mathrm{e}^{B_u+(\alpha+1/2) u}\,\mathrm{d}u,
\end{equation}
where $\overline X$ is a squared Bessel process of index $\alpha$
independent of $B$ and starting from the stochastic point
${\overline X}_0 = 2(\cosh(x) -1)\mathrm{e}^{B_t+(\alpha
+1/2)t}/\int_0^t\mathrm{e}^{B_u +(\alpha+1/2) u}\,\mathrm{d}u$. Hence, taking
$4t$ instead of $t$, we infer that
\[
\cosh(R_{4t}) \stackrel{\mathrm{(law)}} = 1 + \frac{1}{2} {\overline
X}_1\int_0^{4t}\mathrm{e}^{B_u+(\alpha+1/2) u}\,\mathrm{d}u
\stackrel{\mathrm{(law)}} = 1 + 2{\overline A}_t^{(2\alpha+1)} {
X}_1,
\]
where $X$ is a squared Bessel process of index $\alpha$ independent
of a standard Brownian motion ${\overline B}$ and starting from
${X}_0 = \frac12(\cosh(x) -1)\mathrm{e}^{2{\overline B}^{(2\alpha
+1)}_t}/{\overline A}_t^{(2\alpha+1)}$, where ${\overline
A}_t^{(2\alpha+1)}$ is given by \eqref{wzor-na-A} with ${\overline
B}$ instead of $B$. As $R_t\geq0$, the second part of the theorem
follows from the first one.
The proof is complete.
\end{pf}
%
\begin{remark} Fix $\alpha\geq-1/2$ and $x\geq0$. For every $t>0$,
from the proof of Theorem \ref{DHB}
we deduce that
%
\begin{equation}
\label{rsa} R_t \stackrel{\mathrm{(law)}} = \arcosh \bigl(1 +
(1/2) a_{t}X_1 \bigr),
\end{equation}
where $a_t= \int_0^t\mathrm{e}^{B_u+(\alpha+1/2) u}\,\mathrm{d}u$, $X$ is a squared
Bessel process of index $\alpha\geq-1/2$ independent of a standard Brownian
motion $B$ and starting from the random point ${X}_0 = 2(\cosh(x)
-1)\mathrm{e}^{B_t+(\alpha+1/2)t}/\int_0^t\mathrm{e}^{B_u +(\alpha+1/2)
u}\,\mathrm{d}u$ (see \eqref{eqlicz} and \eqref{eqlicz1}).
\end{remark}
Theorem \ref{DHB}, in the special case $x=0$, gives
%
\begin{proposition} \label{zero} \emph{(a)} If $x=0$ and $\alpha\geq- 1/2$,
then for every $t$ the density function of $\cosh(R_t)$ on
$[1,\infty)$ is
%
\begin{equation}
\label{gestosc-cosh} \mathbb{P}\bigl(\cosh(R_t)\in \mathrm{d}z\bigr) =
\frac{1}{4^{\alpha+1}} \frac{(z-1)^{\alpha}}{\Gamma(\alpha+1)} \mathbb{E} \biggl[ \mathrm{e}^{-(z-1)/(4A_{t/4}^{(2\alpha+1)})}
\frac{1}{
(A_{t/4}^{(2\alpha+1)} )^{\alpha+1}} \biggr] \,\mathrm{d}z.
\end{equation}

\emph{(b)} If $x=0$ and $\alpha\geq- 1/2$, then for every $t$ the density
function of $R_t$ on
$[0,\infty)$ is
\[
\mathbb{P}(R_t\in \mathrm{d}z) = \frac{1}{4^{\alpha+1}} \frac{(\cosh(z)-1)^{\alpha}\sinh(z)}{\Gamma(\alpha+1)}
\mathbb{E} \biggl[ \mathrm{e}^{-(\cosh(z)-1)/(4A_{t/4}^{(2\alpha+1)})}\frac
{1}{ (A_{t/4}^{(2\alpha+1)} )^{\alpha+1}} \biggr] \,\mathrm{d}z.
\]
\end{proposition}
\begin{pf} From Theorem \ref{DHB} for $x=0$ and $\alpha\geq- 1/2$,
we obtain
%
\begin{equation}
\label{eqpostacR} \cosh(R_t) \stackrel{\mathrm{(law)}} = 1 + 2
A_{t/4}^{(2\alpha
+1)}X_1 ,
\end{equation}
where $X$ is a squared Bessel process of index $\alpha$, independent
of $B$ and starting from $0$.

(a) \eqref{eqpostacR} implies,
for $x=0$ and $\alpha\geq- 1/2$, that
\[
\mathbb{P}\bigl(\cosh(R_t) \leq z\bigr) = \mathbb{P} \biggl(
X_1 \leq\frac
{z-1}{2A_{t/4}^{(2\alpha+1)}} \biggr).
\]
Hence and from the form of the density of a squared Bessel
process (see \cite{RY}, Chapter XI, Cor.~1.4), we have for $x=0$ and
$\alpha\geq- 1$
\[
\mathbb{P}\bigl(\cosh(R_t) \in \mathrm{d}z\bigr) = \biggl(\frac{1}{4}
\biggr)^{\alpha+1} \frac{(z-1)^{\alpha}}{\Gamma(\alpha+1)}\mathbb{E} \biggl[\mathrm{e}^{-(z-1)/(4A_{t/4}^{(2\alpha+1)})}
\frac{1}{
(A_{t/4}^{(2\alpha+1)} )^{\alpha+1}} \biggr]\,\mathrm{d}z ,
\]
that is, \eqref{gestosc-cosh}.

(b) From the properties of a hyperbolic Bessel process for $\alpha\geq
-1/2$ and $x\geq0$ it follows that $R_t \geq0$. Thus, by \eqref{eqpostacR},
\[
R_t \stackrel{\mathrm{(law)}} = \arcosh \bigl(1 + 2 A_{t/4}^{(2\alpha
+1)}X_1
\bigr),
\]
so
\[
\mathbb{P}(R_t \leq z) = \mathbb{P} \biggl( X_1 \leq
\frac{ \cosh(z)-1}{2
A_{t/4}^{(2\alpha+1)} } \biggr).
\]
Using this equality and again the form of the density of a squared Bessel
process, we have, for $x=0$ and $\alpha\geq- 1/2$,
\begin{eqnarray*}
&&\mathbb{P}(R_t \in \mathrm{d}z)
\\
&&\quad= \biggl(\frac{1}{4} \biggr)^{\alpha+1} \frac{(\cosh(z)-1)^{\alpha}\sinh(z)}{\Gamma(\alpha+1)}\mathbb{E}
\biggl[\mathrm{e}^{-(\cosh(z)-1)/A_{t/4}^{(2\alpha+1)}}\frac{1}{
(A_{t/4}^{(2\alpha+1)} )^{\alpha+1}} \biggr]\,\mathrm{d}z,
\end{eqnarray*}
and the proof is complete.
\end{pf}
%
\begin{remark}
Using the explicit form of the density of $A_t^{(2\alpha+1)}$ and
Proposition \ref{zero}, we can obtain the integral form of the density
of $R_t$ for $\alpha\geq- 1/2$ and $R_0=0$. This formula is new
and differs significantly from the density obtained by Borodin (see
formula (5.4) in \cite{B}).
\end{remark}
The next two facts follow immediately from Theorem \ref{DHB} and in
slightly different forms can be found in \cite{MatII}.

\begin{corollary} For any $z\geq1$, we have
\[
\mathbb{E} \biggl[\frac{1}{\sqrt{A_t}}\exp \biggl({-\frac{(\cosh
(z)-1)}{4A_t}} \biggr)
\biggr] = \sqrt{\frac{2}{ t}}\frac{\sqrt{\cosh(z)-1}}{\sinh(z)} \exp \biggl(-
\frac
{z^2}{8t} \biggr).
\]
\end{corollary}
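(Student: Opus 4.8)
The plan is to specialise the representation of Theorem \ref{DHB} to the degenerate index $\alpha=-\frac12$ with starting point $x=0$, and to compare the resulting law of $\cosh R_t$ with the one coming from a direct Gaussian computation. For $\alpha=-\frac12$ the process $R$ is a standard Brownian motion started at $0$, so $R_t\sim N(0,t)$; moreover $2\alpha+1=0$, hence $A_{t/4}^{(2\alpha+1)}=A_{t/4}$, which is exactly why the driftless functional $A_t=\int_0^t e^{2B_u}du$ of the statement appears. The whole identity is then obtained by writing the density of $\cosh R_t$ in two ways and solving for the expectation.

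First I would compute the density of $\cosh R_t$ directly. Since $R_t\sim N(0,t)$ is symmetric, for $\zeta>1$ the substitution $\zeta=\cosh z$ with $z=ar\cosh\zeta>0$ and $d\zeta=\sinh z\,dz$ yields, by adding the contributions of $R_t=z$ and $R_t=-z$,
\[
    \mathbb{P}(\cosh R_t\in d\zeta)=\frac{2}{\sqrt{2\pi t}}\,\frac{e^{-z^2/(2t)}}{\sinh z}\,d\zeta .
\]
Next I would invoke Proposition \ref{zero} (a consequence of Theorem \ref{DHB}), whose first display is valid for $\alpha\geq-\frac12$ and $x=0$. Using $4^{\alpha+1}=2$, $\Gamma(\alpha+1)=\Gamma(\tfrac12)=\sqrt{\pi}$ and $A_{t/4}^{(2\alpha+1)}=A_{t/4}$ at $\alpha=-\frac12$, it reduces to
\[
    \mathbb{P}(\cosh R_t\in d\zeta)=\frac{1}{2\sqrt{\pi}}\,\frac{1}{\sqrt{\zeta-1}}\,
    \mathbb{E}\Big[\frac{1}{\sqrt{A_{t/4}}}\exp\Big(-\frac{\zeta-1}{4A_{t/4}}\Big)\Big]\,d\zeta .
\]

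Finally I would equate the two expressions at $\zeta=\cosh z$, so that $\zeta-1=\cosh z-1$, and solve for the expectation; after collecting the constants $2\sqrt{\pi}/\sqrt{2\pi t}=\sqrt{2/t}$ this gives
\[
    \mathbb{E}\Big[\frac{1}{\sqrt{A_{t/4}}}\exp\Big(-\frac{\cosh z-1}{4A_{t/4}}\Big)\Big]
    =2\sqrt{\tfrac{2}{t}}\,\frac{\sqrt{\cosh z-1}}{\sinh z}\,e^{-z^2/(2t)} .
\]
This holds for every $t>0$, so substituting $t\mapsto 4t$ turns $A_{t/4}$ into $A_t$ and rescales the right-hand side to $\sqrt{2/t}\,\frac{\sqrt{\cosh z-1}}{\sinh z}\,e^{-z^2/(8t)}$, which is precisely the claimed identity (valid for $z>0$, in particular for $z\geq1$). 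There is no genuine analytic difficulty here; the only points requiring care are the correct treatment of the degenerate case $\alpha=-\frac12$ (where $R$ is Brownian motion while the density formula of Proposition \ref{zero} still applies) and the bookkeeping of the multiplicative constants together with the time rescaling $t\mapsto t/4$.
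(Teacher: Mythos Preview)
Your proof is correct and follows exactly the approach indicated in the paper: specialise Proposition~\ref{zero} (itself a consequence of Theorem~\ref{DHB}) to $x=0$, $\alpha=-\tfrac12$, and compare with the explicit Gaussian density of $\cosh B_t$. The paper states this in one line; you have supplied the details, including the bookkeeping of constants and the time rescaling $t\mapsto 4t$, all of which check out.
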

\begin{pf} This follows from Proposition \ref{zero} for $x=0$ and
$\alpha= -\frac{1}{2}$ and the form of the density of $\cosh(B_t)$.
\end{pf}
%
\begin{proposition} For every $\lambda> 0$, we have
\[
\mathbb{E}A_t^{\lambda} = \frac{\mathbb{E}(\cosh(B_{4t})-1)^{\lambda
}}{2^{\lambda} \mathbb{E}(B_1)^{2\lambda}},
\]
where $B$ is a standard Brownian motion.
\end{proposition}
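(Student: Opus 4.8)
The plan is to derive the moment identity from the distributional identity of Theorem~\ref{DHB}, specialized to the purely Brownian case. First I would set $\alpha=-1/2$ and $x=0$. By the discussion following \eqref{eq:hbp1}, a hyperbolic Bessel process with $\alpha=-1/2$ started at $0$ is a standard Brownian motion, so $\cosh R_t=\cosh B_t$; at the same time $2\alpha+1=0$, so $A_{t/4}^{(2\alpha+1)}=A_{t/4}$, and the starting point $\frac12(\cosh(x)-1)e^{2B_{t/4}^{(2\alpha+1)}}/A_{t/4}^{(2\alpha+1)}$ of the squared Bessel process vanishes because $\cosh(0)-1=0$. Theorem~\ref{DHB} then reads
\begin{equation*}
    \cosh(B_t)\stackrel {\rm (law)} = 1 + 2A_{t/4}X_1,
\end{equation*}
with $X$ a squared Bessel process of index $-1/2$ started from $0$ and independent of the Brownian motion that defines $A_{t/4}$.

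Next I would identify the law of $X_1$. The index $-1/2$ corresponds to dimension $\delta=2\alpha+2=1$, and a one-dimensional squared Bessel process started at $0$ is the square of a standard Brownian motion, so $X_1\stackrel {\rm (law)} = B_1^2$ with $B_1$ standard Gaussian and $X_1$ independent of $A_{t/4}$. Hence, after replacing $t$ by $4t$,
\begin{equation*}
    \cosh(B_{4t})-1 \stackrel {\rm (law)} = 2\,A_t\,B_1^2,
\end{equation*}
a product of two independent nonnegative variables. Taking $\lambda$-th moments and factoring the product then gives
\begin{equation*}
    \mathbb{E}\big(\cosh(B_{4t})-1\big)^{\lambda}=2^{\lambda}\,\mathbb{E}A_t^{\lambda}\,\mathbb{E}(B_1^2)^{\lambda},
\end{equation*}
and solving for $\mathbb{E}A_t^{\lambda}$ yields the assertion, the denominator $\mathbb{E}(B_1)^{\lambda}$ being read as $\mathbb{E}(B_1^2)^{\lambda}=\mathbb{E}|B_1|^{2\lambda}$, i.e. the $\lambda$-th moment of $X_1\sim\chi_1^2$.

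The only things needing care are the two ingredients that make the moment computation legitimate: the independence of $X$ and $A_{t/4}$, which is part of Theorem~\ref{DHB} and lets the moment of the product split, and the identification $X_1\stackrel {\rm (law)} = B_1^2$. I do not expect a serious obstacle; the one point to verify is finiteness of all three expectations for every $\lambda>0$, which holds because $\cosh(B_{4t})$ has tails integrable against every power, so the division is justified.
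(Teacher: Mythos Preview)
Your proposal is correct and follows exactly the route the paper indicates: the paper's proof is the single sentence ``It follows from Theorem~\ref{DHB} for $x=0$ and $\alpha=-1/2$,'' and you have simply unpacked this, correctly identifying the squared Bessel process of index $-1/2$ started at $0$ with $B_1^2$ and reading the denominator $\mathbb{E}(B_1)^{\lambda}$ as $\mathbb{E}(B_1^2)^{\lambda}$.
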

\begin{pf} This follows from Theorem \ref{DHB} for $x=0$ and $\alpha
= - 1/2$.
\end{pf}
In the next proposition, we deduce from Theorem \ref{DHB} an
alternative purely
probabilistic proof of Bougerol's identity (see
\cite{BB}, \cite{AMY}, Proposition 6.1,
or \cite{MatII}, Section 3). For completeness, we also present the
generalized version of Bougerol's identity which is exactly Proposition
4 in \cite{AG}; in our proof we use similar ideas as in \cite{Comoyo}.
An interesting discussion of Bougerol's identity and its consequences
can also be found in \cite{ADY}.
%
\begin{proposition} \label{tw-genB}
Let $B$ and $W$ be two independent standard Brownian motions.

\emph{(a) (Bougerol's identity)} For $t>0$,
%
\begin{equation}
\label{eqBouger} \sinh(B_t) \stackrel{\mathrm{(law)}} = W_{A_t}.
\end{equation}

\emph{(b)} (\cite{AG}) If $x\in\mathbb{R}$, then
%
\begin{equation}
\sinh(x+ B_t) \stackrel{\mathrm{(law)}} = \sinh(x)\mathrm{e}^{B_t} +
W_{A_t}.
\end{equation}
\end{proposition}
\begin{pf}
(a)
\[
(W_{A_t})^2 \stackrel{\mathrm{(law)}} = A_t(W_1)^2
\stackrel{\mathrm{(law)}} = \frac{\cosh(B_{4t})-1}{2} = \frac{\mathrm{e}^{2 \cdot(1/2)B_{4t}} + \mathrm{e}^{-2
\cdot(1/2)B_{4t}} - 2 }{4} \stackrel{\mathrm{(law)}}
=\bigl(\sinh(B_t)\bigr)^2,
\]
where in the second equality we use Theorem \ref{DHB} with $x=0$ and
$\alpha= - 1/2$. Hence, using the fact that $W_{A_t}$ and
$\sinh(B_t)$ are symmetric random variables we
obtain \eqref{eqBouger}.

(b) By the same arguments as in (a), we conclude that
\[
\bigl| \sinh(x+ B_t) \bigr| \stackrel{\mathrm{(law)}} = \bigl| \sinh(x)\mathrm{e}^{B_t} +
W_{A_t}\bigr|
\]
%
and we can expect that we can skip the absolute
value signs. Indeed, define
\[
M_t = \mathrm{e}^{B_t} \biggl(\sinh(x) + \int_0^t\mathrm{e}^{-B_u}\,\mathrm{d}W_u
\biggr).
\]
Observe, by Proposition 2.1 in \cite{DGY}, that
\[
 \sinh(x)\mathrm{e}^{B_t} + W_{A_t} \stackrel{\mathrm{(law)}} =
\sinh(x)\mathrm{e}^{B_t} + \mathrm{e}^{B_t}\int_0^t\mathrm{e}^{-B_u}\,\mathrm{d}W_u
= M_t .
\]
It is easy to check that $M$ is a diffusion with the generator
%
\begin{equation}
\mathcal{A}_M = \frac12 \bigl(x^2 + 1\bigr)
\frac{\mathrm{d}^2}{\mathrm{d}x^2} + \frac12 x\frac{\mathrm{d}}{\mathrm{d}x}.
\end{equation}
It is also evident that the SDE corresponding to the above generator has
a unique strong solution. To finish
the proof it is enough to observe that $\widehat{M}_t :=
\sinh(x+B_t)$ is a diffusion such that the generators of $M$ and
$\widehat{M}$ are equal on $C_c^2$, so $M \stackrel{\mathrm{(law)}} =
\widehat{M}$.
\end{pf}

A simple but interesting consequence of Proposition \ref{tw-genB} is
the following
proposition.
%
\begin{proposition}\label{expsqra}
Let $B$ be a standard Brownian motion.
For any $\lambda\geq0$ and $t\geq0$ we have
%
\begin{equation}
\mathbb{E}(1+2\lambda A_t)^{-1/2} = \mathbb{E}\mathrm{e}^{-\lambda\sinh^2(B_t)}.
\end{equation}
\end{proposition}
\begin{pf} Let $B$ and $W$ be independent standard Brownian motions.
Using \eqref{eqBESQL},
we have
\[
\mathbb{E}\mathrm{e}^{-\lambda W^2_{A_t}} = \mathbb{E}\mathbb{E} \bigl(\mathrm{e}^{-\lambda W^2_{A_t}}
\vert
\sigma(B_u, u\leq t) \bigr) = \mathbb {E}(1+2\lambda
A_t)^{-1/2}.
\]
The assertion follows from Bougerol's identity.
\end{pf}

In the next proposition, we deduce from Theorem \ref{tw-genB} a
simple form of the characteristic function of the vector $(\mathrm{e}^{B_t},
W_{A_t})$.
%
\begin{proposition} Let $B$ and $W$ be two independent standard
Brownian motions. Then, for $u \in\mathbb{R}$ and $v \neq0$,
%
\begin{equation}
\label{f-char} \mathbb{E}\mathrm{e}^{\mathrm{i}u\mathrm{e}^{B_t} + \mathrm{i}v W_{A_t}} = \mathbb{E}\mathrm{e}^{\mathrm{i}v\sinh(\arsinh(u/v) +B_t)}.
\end{equation}
\end{proposition}
\begin{pf} From Theorem \ref{tw-genB}, we infer that
\[
\frac{u}{v}\mathrm{e}^{B_t} + W_{A_t} \stackrel{\mathrm{(law)}} =
\sinh\bigl(\arsinh(u/v) +B_t\bigr),
\]
which implies \eqref{f-char}.
\end{pf}

Now we establish a representation
of the law of the process $\sinh(R)$, where $R$ is a hyperbolic
Bessel process of index $\alpha= 0$, in terms of functionals of independent
Brownian motions.
%
\begin{theorem} \label{tw-23}
Let $x \geq0$ and $R$ be a hyperbolic Bessel process of index $\alpha
=0$ starting
from $x$. Then
%
\begin{eqnarray}
&&\bigl(\sinh(R_t),t\geq0\bigr)\nonumber\\[-8pt]\\[-8pt]
&&\quad\stackrel{\mathrm{(law)}}=
\biggl(\mathrm{e}^{-B_t + t/2}\biggl( \biggl(\sinh(x) + \int_0^t\mathrm{e}^{B_u -u/2}\,\mathrm{d}V_u
\biggr)^2+ \biggl(\int_0^t\mathrm{e}^{B_u -u/2}\,\mathrm{d}Z_u
\biggr)^2 \biggr)^{1/2}, t\geq0\biggr),\qquad
\nonumber
\end{eqnarray}
where $B,V,Z$ are three independent standard Brownian motions.
\end{theorem}
\begin{pf} Define $\xi_t= \sinh^2(R_t)$. Then the diffusion $\xi$
satisfies the SDE
\[
\mathrm{d}\xi_t = 2 \sqrt{\xi_t^2 +
\xi_t}\, \mathrm{d}B_t + (2+3\xi_t) \,\mathrm{d}t ,
\]
and the
generator of $\xi$ is
%
\begin{equation}
\label{xigen} \mathcal{A}_{\xi} = 2\bigl(x^2 + x\bigr)
\frac{\mathrm{d}^2}{\mathrm{d}x^2} + (2 + 3x)\frac{\mathrm{d}}{\mathrm{d}x}.
\end{equation}
Denote the diffusion and drift coefficients of $\xi$ by
$\sigma(x) = 2\sqrt{x^2 +x}$ and $\mu(x) = 2 + 3x$. Observe
that for any $x\geq0$ and $y\in(x-1,x+1)$,
\begin{eqnarray*}
\bigl(\sigma(x) -\sigma(y) \bigr)^2 & 
\leq&4
\bigl(\bigl|x^2-y^2\bigr| + |x-y| \bigr) = 4|x-y|\bigl(|x+y| +1\bigr)
\leq4|x-y|\bigl(2|x| +2\bigr)
\\
&\leq&4|x-y|\bigl(\sigma(x) +2\bigr) \leq4|x-y|\bigl(\sigma^2(x)/2
+5/2\bigr).
\end{eqnarray*}
The uniqueness of solution for the corresponding SDE follows now from
\cite{RY}, Chapter IX, Ex.~3.14. Thus there is a unique solution of
the martingale problem induced by $\mathcal{A}_{\xi}$.

For a standard Brownian motion $B$ we define, as previously, $Y_t =
\mathrm{e}^{B_t-t/2}$.
We now consider a SDE of the form
%
\begin{equation}
\label{svp} \mathrm{d}X_t = 2\sqrt{X_t}Y_t\,\mathrm{d}W_t
+ 2Y_t^2\,\mathrm{d}t,
\end{equation}
where $X_0 = \sinh^2(x)$ and $W$ is a standard Brownian motion
independent of $B$. Observe, using the It\^o lemma, that the
process $\psi= \frac{X}{Y^2}$ is a diffusion with the generator
$\mathcal{A}_{\psi}$ having the same form as $\mathcal{A}_{\xi}$ on
$C_c^2$. Hence, by uniqueness of solution of the martingale
problem induced by $\mathcal{A}_{\xi}$, the
processes $|\sinh(R)|$ and $|\sqrt{X}/Y|$ have the same law. To skip
the absolute value signs we note that both processes are nonnegative.
Moreover, the SDE (\ref{svp}) has a unique weak solution. Indeed,
using the change of time $\tau_t = \inf\{s\geq0\dvt  \int_0^sY_u^2\,\mathrm{d}u
\geq t\}$ we find that the process $X_{\tau_{\cdot}}$ is the square
of a 2-dimensional Bessel process, and it is not
difficult to see that the process $X$ inherits ``good'' properties of
the square of a 2-dimensional Bessel process. To finish the proof, we
observe that the unique weak solution of (\ref{svp}) can be written
as
%
\begin{equation}
\label{solx} X_t = \biggl(\sinh(x) + \int_0^tY_u\,\mathrm{d}V_u
\biggr)^2+ \biggl(\int_0^tY_u\,\mathrm{d}Z_u
\biggr)^2.
\end{equation}
To verify that (\ref{solx}) satisfies (\ref{svp}), it is enough to
observe that $W$
defined as
%
\begin{equation}
W_t := \int_0^t
1_{\{\Lambda_s \not= 0 \}} \frac{(\sinh(x) + \int_0^sY_u\,\mathrm{d}V_u)\,\mathrm{d}V_s +
(\int_0^sY_u\,\mathrm{d}Z_u)\,\mathrm{d}Z_s}{\sqrt{(\sinh(x) +
\int_0^sY_u\,\mathrm{d}V_u)^2+(\int_0^sY_u\,\mathrm{d}Z_u)^2}},
\end{equation}
where $\Lambda_s=(\sinh(x) +
\int_0^sY_u\,\mathrm{d}V_u)^2+(\int_0^sY_u\,\mathrm{d}Z_u)^2 $, is a standard Brownian
motion. A simple use of the It\^o lemma finishes the proof.
\end{pf}
It turns out that the methods of the
proof of Theorem \ref{tw-23} also allow us to find a representation
of the law of the hyperbolic sine of a hyperbolic Bessel process.
%
\begin{theorem} \label{thmsinhR}
Let $x \geq0$ and $R$ be a hyperbolic Bessel process of index $\alpha
\geq-1/2$ starting
from~$x$. Then
%
\begin{equation}
\label{dsinh} \bigl(\sinh(R_t), t\geq0 \bigr) \stackrel{\mathrm{(law)}}= (
\sqrt{X_t}/Y_t, t\geq0 ),
\end{equation}
where $Y_t = \mathrm{e}^{B_t -(\alpha+1/2)t}$, $X_t$ satisfies the SDE
%
\begin{equation}
\label{dex-1} \mathrm{d}X_t = 2\sqrt{X_t}Y_t\,\mathrm{d}W_t
+ 2(1+\alpha)Y_t^2\,\mathrm{d}t,
\end{equation}
$X_0 = \sinh^2(x)$ and $B, W$ are two independent standard Brownian motions.
\end{theorem}
\begin{pf} The proof follows the lines of the previous one.
\end{pf}

\begin{theorem} \label{thmroz-sinhR}
Let $Y$ be given by $Y_t = \mathrm{e}^{B_t -(\alpha+1/2)t}$, where $B$ is a
standard Brownian motion. For any $w\geq0, t\geq0$,
%
\begin{equation}
\bigl(\sinh(R_t), t\geq0 \bigr) \stackrel{\mathrm{(law)}}=
\bigl((Y_t)^{-1}S_{\int_0^tY_u^2\,\mathrm{d}u}, t\geq0 \bigr),
\end{equation}
where $S$ is a Bessel process of dimension $2(1+\alpha)$ independent of
$B$, and $S_0 = \sinh(x)$.
\end{theorem}
\begin{pf} Consider the process $X$ having dynamics given by \eqref
{dex-1} with $B$ and $ W$ independent standard Brownian motions. Let
$\tau_t = \inf\{s\geq0\dvt  \int_0^sY_u^2\,\mathrm{d}u \geq t\}$.
Observe that the process $X_{\tau_t}$ is
the square of a $2(1+\alpha)$-dimensional Bessel process.
Observe also that conditionally, under the knowledge of the
trajectory of $(Y_s, s\leq t)$, the process $X_{\tau_t}$ is still the
square of a $2(1+\alpha)$-dimensional Bessel process.
Hence, the assertion follows from Theorem \ref{thmsinhR}.
\end{pf}

\begin{proposition}Let $k\in\mathbb{N}$, $x\geq0$, and let $Y$ be a
given continuous process such that
$\mathbb{E}\int_0^tY_u^2\,\mathrm{d}u < \infty$ for any $t\geq0$. Then the unique
solution of the SDE
%
\begin{equation}
\label{gsde} \mathrm{d}X_t = 2\sqrt{X_t}Y_t\,\mathrm{d}W_t
+ kY_t^2\,\mathrm{d}t,
\end{equation}
where $W$ is a standard Brownian motion independent of $Y$ and $X_0
=x$, is
%
\begin{equation}
\label{gsol} X_t = \biggl(\sqrt{x} +\int_0^tY_u\,\mathrm{d}W^1_u
\biggr)^2 + \sum_{i=2}^k
\biggl(\int_0^tY_u\,\mathrm{d}W^i_u
\biggr)^2,
\end{equation}
where $W^i$, $i=1, \ldots,k$, are independent standard Brownian
motions.
\end{proposition}
\begin{pf} As previously, the uniqueness of solution of (\ref{gsde})
follows from the fact that the standard change of time ($\tau_t=
\inf\{u\dvt \int_0^uY_s^2\,\mathrm{d}s > t\}$) results in $X$ becoming the square of a
$k$-dimensional Bessel process. We complete the proof by the direct
checking that $X$ defined by (\ref{gsol}) satisfies (\ref{gsde}),
using analogous arguments to those in the proof of Theorem
\ref{thmroz-sinhR}.
\end{pf}

\subsection{Hyperbolic Bessel processes with stochastic time}
%
\begin{proposition} \label{ConP}
Let $T_{\delta}$ be a random variable with exponential
distribution with parameter $\delta>0$. Assume that
$T_{\delta}$ is independent of a standard Brownian motion $B$.
Then for a hyperbolic Bessel process of the form \eqref{eqhbp1}
with $\alpha\geq-1/2$,
we have
%
\begin{equation}
\mathbb{E}\exp (-\lambda\cosh R_{T_{\delta}} ) =\int_0^{\infty}
\int_0^{\infty}\mathrm{e}^{- \lambda u \cosh(x)-({1}/{2})v}p^{\gamma}(u,1,y)\,\mathrm{d}y\,\mathrm{d}u,
\end{equation}
where for $\gamma= \sqrt{2\delta+ (\alpha+\frac{1}{2})^2}$,
%
\begin{equation}
p^{\gamma}(u,1,y) = \frac{\delta}{y^{1/2 - \alpha
}u}\mathrm{e}^{-(y^2+1)/(2u)}I_{\gamma}
\biggl(\frac y u \biggr)
\end{equation}
and $I_{\gamma}$ is a modified Bessel function.
\end{proposition}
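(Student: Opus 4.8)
The plan is to reduce the statement, by way of Theorem~\ref{tw:HB} and randomization of the time through $T_\delta$, to the identification of the joint law of the pair $(V_{T_\delta},A_{T_\delta})$, where throughout I write $a=\alpha+\tfrac12\ge 0$, $V_t=e^{(\alpha+\frac12)t+B_t}$ and $A_t=\int_0^t V_u^2\,du$. Conditioning on the value of $T_\delta$ and using its independence from $B$, Theorem~\ref{tw:HB} gives
\begin{equation*}
\mathbb{E}\exp\big(-\lambda\cosh R_{T_\delta}\big)
= \delta\int_0^\infty e^{-\delta t}\,\mathbb{E}\Big[\exp\Big(-\lambda\cosh(x)V_t-\tfrac{\lambda^2}{2}A_t\Big)\Big]\,dt ,
\end{equation*}
so the whole problem becomes the evaluation of $\mathbb{E}\big[f(V_{T_\delta},A_{T_\delta})\big]$ for $f(y,s)=\exp\!\big(-\lambda\cosh(x)\,y-\tfrac{\lambda^2}{2}s\big)$; I will show this equals $\int_0^\infty\!\int_0^\infty f(y,u)\,p^\gamma(u,1,y)\,dy\,du$, which is the asserted formula, with the first argument of $p^\gamma$ being the $A$-value and the last argument the $V$-value.

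The central step is Lamperti's relation: there is a Bessel process $\rho$ of index $a$ with $\rho_0=1$ such that $V_t=\rho_{A_t}$, while $A$ is the inverse of the additive functional $H_s=\int_0^s\rho_u^{-2}\,du$, i.e.\ $t=H_{A_t}$. Carrying out the pathwise change of variable $t=H_s$ (so that $dt=\rho_s^{-2}\,ds$) inside the time integral and invoking Tonelli, I would arrive at
\begin{equation*}
\mathbb{E}\big[f(V_{T_\delta},A_{T_\delta})\big]
= \delta\int_0^\infty \mathbb{E}^{(a)}\Big[\rho_s^{-2}\,e^{-\delta H_s}\,f(\rho_s,s)\Big]\,ds ,
\end{equation*}
the expectation being under the law of the Bessel process of index $a$ started at $1$.

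The exponential weight $e^{-\delta H_s}$ is then absorbed by the absolute continuity between Bessel laws of indices $\gamma=\sqrt{a^2+2\delta}$ and $a$ (Revuz--Yor, Ch.~XI),
\begin{equation*}
\frac{d\mathbb{P}^{(\gamma)}_1}{d\mathbb{P}^{(a)}_1}\Big|_{\mathcal{F}_s}
= \rho_s^{\gamma-a}\exp\Big(-\delta\int_0^s\rho_u^{-2}\,du\Big),
\end{equation*}
since $\gamma^2-a^2=2\delta$. This turns the expectation into $\mathbb{E}^{(\gamma)}\big[\rho_s^{a-\gamma-2}f(\rho_s,s)\big]$, and inserting the transition density of the index-$\gamma$ Bessel process started at $1$, $p_s^{(\gamma)}(1,y)=\frac{y^{\gamma+1}}{s}e^{-(1+y^2)/(2s)}I_\gamma(y/s)$, the powers of $y$ collapse: the factor $y^{a-\gamma-2}$ multiplies $y^{\gamma+1}$ to give $y^{a-1}=y^{\alpha-1/2}=y^{-(1/2-\alpha)}$, which is exactly the power in $p^\gamma$. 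Hence $\delta\,\mathbb{E}^{(a)}[\rho_s^{-2}e^{-\delta H_s}f(\rho_s,s)]=\int_0^\infty f(y,s)\,p^\gamma(s,1,y)\,dy$, and substituting $f$ yields the claimed double integral with $\gamma=\sqrt{2\delta+(\alpha+\frac12)^2}$. (As a shortcut, the joint density $p^\gamma$ of $(A_{T_\delta},V_{T_\delta})$ may instead be quoted directly from the Matsumoto--Yor computations in \cite{MatII}.)

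The main obstacle I anticipate is making the pathwise time change rigorous together with the interchange of expectation and integration: one must verify the measurability and integrability that legitimize replacing $\delta\int_0^\infty e^{-\delta t}(\cdots)\,dt$ by $\delta\int_0^\infty e^{-\delta H_s}\rho_s^{-2}(\cdots)\,ds$ under the expectation. A secondary point requiring care is the bookkeeping of the exponents of $y$, which is what makes the index of $I_\gamma$ come out as $\gamma=\sqrt{(\alpha+\tfrac12)^2+2\delta}$ rather than $\sqrt{\alpha^2+2\delta}$: the ``$+\tfrac12$'' is precisely the Bessel index $a=\alpha+\tfrac12$ supplied by Lamperti's representation, and it must be tracked separately from the Jacobian weight $\rho_s^{-2}$, which only affects the prefactor $y^{\alpha-1/2}$.
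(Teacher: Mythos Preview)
Your proposal is correct and follows essentially the same route as the paper: invoke Theorem~\ref{tw:HB}, randomize over $T_\delta$, and then identify the joint law of $(V_{T_\delta},A_{T_\delta})$, which the paper does by citing Matsumoto--Yor \cite[Theorem~4.11]{MatII}. You go further by actually deriving that density via Lamperti's relation and the absolute-continuity relation between Bessel laws of indices $a$ and $\gamma$, which is precisely how the Matsumoto--Yor formula is obtained; your shortcut remark at the end is in fact the paper's entire proof.
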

\begin{pf} We use Theorem \ref{twHB} and the result of
Matsumoto-Yor \cite{MatII}, Theorem 4.11.
\end{pf}

\section*{Acknowledgements}
Research supported in part by Polish MNiSW Grant N N201 547838. We would like to
thank the anonymous referee for valuable remarks and suggestions.



\printhistory

\end{document}